\numberwithin{equation}{section} \topmargin -1cm
\newtheorem{theo}{Theorem}[section]
\newtheorem{lemma}[theo]{Lemma}
\newtheorem{prop}[theo]{Proposition}
\newtheorem{defi}[theo]{Definition}
\theoremstyle{definition}
\newtheorem{rem}[theo]{Remark}
\newtheorem{ass}[theo]{Assumptions}
\def\clip{C_{Lip}^1}
\def\vertv{\vert_{_{\V}}}
\def\t{\tau}
\def\a{\alpha}
\def\A{A_0}
\def\V{V^\prime}
\def\ranglev{\rangle_{V^\prime}}
\def\lipd{ ]\kern-1pt_{_{L}}}
\def\lips{[}
\def\H{\mathcal{H}}
\def\ro3{\theta}
\def\e{\varepsilon}
\def\B{\mathcal{B}}
\def\qed{\hbox{\hskip 6pt\vrule width6pt height7pt
               depth1pt  \hskip1pt}\bigskip}
\begin{document}

\title{Dynamic programming for infinite
horizon boundary control problems of PDE's with age structure}
\author{Silvia Faggian\footnote{LUM ``Jean Monnet", Casamassima, I-70010, faggian@lum.it},
Fausto Gozzi\footnote{LUISS ``Guido Carli", Roma, I-00162,
fgozzi@luiss.it}} \maketitle

\begin{abstract}
We develop the dynamic programming approach for a
family of infinite horizon boundary control problems with linear
state equation and convex cost. We prove that the value function
of the problem is the unique regular solution of the associated
stationary Hamilton--Jacobi--Bellman equation and use this to
prove existence and uniqueness of feedback controls. The idea of
studying this kind of problem comes from economic applications, in
particular from models of optimal investment with vintage capital.
Such family of problems has already been studied in the finite
horizon case in \cite{Fa2}\cite{Fa3}. The infinite horizon case is
more difficult to treat and it is more interesting from the point
of view of economic applications, where what mainly matters is the
behavior of optimal trajectories and controls in the long run. The
study of infinite horizon is here performed through a nontrivial
limiting procedure from the corresponding finite horizon problem.
\end{abstract}
{\footnotesize {\bf Keywords.} Linear convex control, boundary
control, Hamilton--Jacobi--Bellman equations, age-structured
systems, optimal control, economic growth, vintage models.\\ {\bf
AMS (MOS) subject classification:} 49J20, 49J27, 35B37.}

\bigskip

\section{Introduction\label{INTRO}}
This paper is devoted to the study of a family of infinite horizon
boundary control problems with linear state equation and convex
cost and of the associated stationary Hamilton--Jacobi--Bellman
(briefly, HJB) equations by means of Dynamic Programming. More
precisely, we let $H$ and $U$ be separable real Hilbert spaces
with scalar products $(\cdot\vert\cdot)_H$ and
$(\cdot\vert\cdot)_U$ respectively, and we consider a dynamical
system of the following type
\begin{equation}\label{eq stato in H}\begin{cases}y^\prime(\t)=\A y(\t)+Bu(\t), & \t\in
]t,+\infty[\\
y(t)=x\in H,\end{cases}\end{equation} where $H$ is the state space,
$y:[t,+\infty[\to H$ is the trajectory, $U$ is the control space and
$u:[t,+\infty[\to U$ is the control, $A_0:D(A_0)\subset H\to H$ is
the infinitesimal generator of a strongly continuous semigroup of
linear operators $\{e^{\t\A}\}_{\t\ge0}$ on $H$,
 and the control operator $B$ is linear and {\it unbounded}, say
 $B:U\to[D(A_0^*)]^\prime$.
Besides,  we consider an {\it infinite horizon} cost functional
given by
\begin{equation}\label{J in H}J_\infty(t,x,u)
=\int_t^{+\infty}e^{-\lambda\tau}\left[g_0\left(y(\t)\right)+h_0\left(u(\t)\right)\right]d\t\end{equation}
where
the function $g_0$ is convex and $C^1$, and $h_0$ is {\it l.s.c.},
convex, superlinear, possibly infinite valued, as better specified
later. Our problem is that of minimizing $J_\infty(t,x,u)$ with
respect to $u$ over a set $\mathcal{U}$ of admissible controls
(which will be denoted with $L^p_\lambda(t,+\infty;U)$, that is an
$L^p$ space with a suitable weight, as defined in Section
\ref{mainsec}).

Then the value function is defined as
\begin{equation}Z_\infty(t,x)=\sup_{u \in L^p_\lambda(t,+\infty;U) }J_\infty(t,x,u).\end{equation}
Since it is easily shown that $Z_\infty(t,x)=e^{-\lambda t}Z_\infty(0,x)$, it is enough to study the HJB
equation associated to the problem with initial time $t=0$, that is
\begin{equation}\label{HJintro}-\lambda \psi(x)+(
\psi^\prime(x)\; |\; \A x)_H-h_0^*(-B^*\psi^\prime(x))+g(x)=0,\
x\in H\end{equation}  whose candidate solution is $Z_\infty(0,x)$.
(Here and in the sequel, $h_0^*$ indicates the L\'egendre
transform of the convex {\it l.s.c.} function $h_0$.)

The problem has been already studied by Faggian and by Faggian and
Gozzi in the papers \cite{Fa2,Fa3,FaSC,FaGo} in the case of finite
horizon, with and without constraints on the control and on the
state, yielding a definition of generalized solutions of the
associated evolutionary HJB equation. This paper studies instead
the infinite horizon case.

It is well known that linear convex control problems with
unbounded control operator $B$ in Hilbert spaces arise when one
rephrases in abstract terms some boundary control problem for
linear PDEs (or, more generally, problems with control on
subdomains). Indeed, we motivate our framework with the
application to a problem of optimal investment with vintage
capital arising in economic theory (originally formulated by
Barucci and Gozzi \cite{BG1,BG2} and then studied also in
\cite{F1,F2,F3}), that cannot be treated with the existing results
and that we describe in detail in Section \ref{es eco}. We also
observe that our framework adapts also to other optimal control
problems driven by first order PDE's or by delay equations and
arising in models of population dynamics (see e.g.
\cite{Almeder,F1}), advertising (see e.g.
\cite{FaGo,FHS,GM1,Marinelli}), general equilibrium with vintage
capital (see e.g. \cite{Boucekkine,FabGoz}).

Our main results are stated in Section \ref{mainsec},  in Theorems
\ref{main}, \ref{th:mainnew}, \ref{th:uniquefeedback}, where we
prove that the value function $ Z_{\infty}(0, \cdot )$ is the
unique regular ($C^1$) solution of the HJB equation
(\ref{HJintro}) and that there exists a unique optimal control
strategy in feedback form. Moreover the value function is the
limit of value functions of suitable finite horizon problems.

We obtain the results by means of the procedure introduced by
Barbu and Da Prato \cite{BD1} (see also Di Blasio \cite{D1,D2})
that consists, roughly speaking, in the following steps:
\begin{itemize}
    \item  Consider a family of suitable problems with finite horizon $T$, with
     value functions $\Psi_T$
     and  show they are  the unique
    regular solutions of the corresponding family of evolutionary HJB equations.\footnote{Indeed
    these facts in our case were shown in \cite{Fa2,Fa3},
    refining the convex regularization method
    by Barbu and Da Prato contained in \cite{BD1}.}
    \item Show that the value functions $\Psi_T$  converge, as $T\to +\infty$,
    to a regular function
    $\Psi_\infty$.
    \item Prove that $\Psi_\infty$ is the unique solution of the stationary
    HJB equation and that it is equal to the value function, $Z_\infty$, of the
    infinite horizon problem with initial time $t=0$; prove the existence
    and uniqueness of optimal feedbacks.
\end{itemize}
In our (boundary control) case a sharp refinement of this methods
is needed. Indeed, with respect to the  papers quoted above, our
problem features two new nontrivial difficulties:

\begin{itemize}
    \item The presence of the boundary control yields the unboundedness of the
control operator $B$ in the state equation (\ref{eq stato in H})
and, as a consequence, the discontinuity of the Hamiltonian in the
HJB equation (\ref{HJintro}). This fact, coupled with the
non-analyticity of the semigroup generated by $A$, induces us to
work in an enlarged space $\V\supset H$. This setting was already
 introduced in \cite{Fa2,Fa3} to treat the corresponding finite horizon problem.
 Of course, since in the examples in Section \ref{es eco} the parameters
  have significance only in $H$,
  we need to prove that when in the extended setting the initial datum $x$ is in $H$,
  then the whole optimal trajectory lies in $H$, and the optimal control behaves accordingly.
    \item The running costs $g_0$ and $h_0$ are not
    bounded from
    below. This means that a two-sided inequality has to be proved in
    order to show the convergence as $T \to +\infty$. To this extent, we exploit the
     coercivity
    of the function $h_0$ to derive that optimal controls are bounded in $L^p_\lambda$.
\end{itemize}

We may say that the contribution of this paper is both to
mathematics and to applications: the results contained in the
theorems in Section \ref{mainsec} extend the existing theory of
regular solutions of HJB equations in Hilbert spaces to a new set of
problems; on the other hand our results are the basis to start the
study of the properties of the optimal state-control pairs in the
economic problem, and in those other applications that can can be
framed into the same setting.

The paper is organized as follows. In next the rest of this Section
(Subsection \ref{TheLiterature}) we review the main literature on
HJB equations in Hilbert spaces. In Section \ref{preliminari} we
recall the definition of strong solution and the results on
existence and uniqueness of strong solutions in the finite horizon
case, as they appear in \cite{Fa2}. In Section \ref{mainsec} we
present the problem in the extended space and we state the main
results. Section \ref{PROOFS} is devoted to proofs and  Section
\ref{es eco} to the description of the economic example.

\subsection{The Literature}
\label{TheLiterature} We recall that optimal control problems for
infinite dimensional systems and the associated HJB equation have
been studied in two different frameworks: one is that
 of classical and strong solutions, and
the other is that of viscosity solutions. We recall also that, as
far as we know, verification techniques have been performed in
infinite dimension just in the classical/strong context, for they
require the value function to be regular (at least in the state
variable).

Regarding Dynamic Programming for boundary control problems only few
results are available. For the case of linear systems and quadratic
costs (where HJB equation reduces to the operator Riccati equation)
the reader is referred {\it e.g.} to the book by Lasiecka and
Triggiani \cite{LT}, to the book by Bensoussan, Da Prato, Delfour
and Mitter \cite{BDDM}, and, for the case of nonautonomous systems,
to the papers by Acquistapace, Flandoli and Terreni \cite{AFT, AT1,
AT2, AT3}. For the case of a linear system and a general convex
cost, we mention the papers by Faggian \cite{ Fa1, Fa1bis, Fa2, Fa3,
FaSC}, by Faggian and Gozzi \cite{FaGo}. On Pontryagin maximum
principle for boundary control problems we mention again the book by
Barbu and Precupanu (Chapter 4 in \cite{BP}).

For the case of distributed control the literature is indeed richer:
we refer the reader to Barbu and Da Prato \cite{BD1, BD2,BD3} for
some linear convex problems, to Di Blasio \cite{D1,D2} for the case
of constrained control, to Cannarsa and Di Blasio \cite{CD} for the
case of state constraints, to Barbu, Da Prato and Popa \cite{BDP}
and to Gozzi \cite{G1,G2,G3} for semilinear systems.

For viscosity solutions and HJB equations in infinite dimension we
mention the series of papers by Crandall and Lions \cite{CL} where
also some boundary control problem arises. Moreover, for boundary
control we mention the papers by Cannarsa, Gozzi and Soner
\cite{CGS} and by Cannarsa and Tessitore \cite{CT} on existence and
uniqueness of viscosity solutions of HJB equation.  We note also
that a verification theorem in the case of viscosity solutions has
been proved in some finite dimensional case in \cite{GSZ,YZ}.

Regarding applications, in addition to the economic literature
recalled above, we refer the reader to the many examples contained
in the books by Lasiecka and Triggiani \cite{LT} and by Bensoussan
{\it et al} \cite{BDDM}.

\bigskip
\section{Preliminaries: the finite horizon case.}\label{preliminari}
We here recall all the relevant
results on the \emph{finite horizon} case that are needed in the
sequel. According to the notation in \cite{Fa2}, if $X$ and $Y$
are Banach spaces, we set
\begin{equation*}\begin{split}
&Lip(X;Y)=\{f:X\to Y ~:~\lips f\lipd:=\sup_{x,y\in X,~x\neq y}
\frac{\vert f(x)-f(y)\vert_{Y}}{\vert x-y\vert_X} <+\infty\}\\
&\clip(X):=\{f\in C^1(X)~:~ \lips f^\prime\lipd<+\infty\}\\
&\B_p(X,Y):=\{f:X\to\mathbb{R}~:~\vert f\vert_{\B_p}:=\sup_{x\in
X} {\vert f(x)\vert_Y\over 1+\vert x\vert_X^p}<+\infty\},\ \ \
\B_p(X):=\B_p(X,\mathbb{R}).\\
\end{split}\end{equation*}
Moreover we set
\begin{equation*}
\Sigma_0(X):=\{w\in \B_2(X)\ :\ w\ {\rm is\ convex,\ } w\in\clip(X)
\}\end{equation*}and, for $T>0$
\begin{equation*}\begin{split}\mathcal{Y}([0,T]\times X)=
\{w:[0,T]&\times X\to \mathbb{R}\ :\ w\in C([0,T],\B_2(X)),\
\\w(t,\cdot)\in&\Sigma_0(X),\ \forall t\in[0,T], \ \ w_x\in C([0,T], \B_1(X,{X^\prime}))\}\\
\end{split}
\end{equation*}

\bigskip

\noindent Then we consider two Hilbert spaces $V,\V$, being dual
spaces, which we do not identify for reasons which are recalled in
Remark \ref{noidentif} and we indicate with
$\langle\cdot,\cdot\rangle$ the duality pairing. We set $\V$ as
the state space of the problem, and denote with $U$ the control
space, being $U$ another Hilbert space.

Given an initial time $t \ge 0$ an initial state $x\in \V$, a finite
horizon $T>t$, a number $p>1$, and a control $u \in L^p(t,T;U)$ we
consider the trajectory in $\V$
\begin{equation}\label{mild}y(\t)=e^{(\t-t)A}x+\int_t^\t
e^{(\t-\sigma)A}Bu(\sigma)d\sigma,\ \ \tau\in[t,T] ,\
\end{equation}
and a profit functional of type
\begin{equation}\label{J in V}J_T(t,x,u)
=\int_t^T\left[g\left(\t,y(\t)\right)+h\left(\t,u(\t)\right)\right]d\t+
\varphi(y(T)).\end{equation} We deal with the problem of
minimizing $J_T(t,x,\cdot)$ over all $u \in L^p(t,T;U)$ taking the
following set of assumptions on the data.
\begin{ass}\label{asst}

\begin{enumerate}

\item[1.] $A:D(A)\subset\V\to\V$ is the infinitesimal generator of
a strongly continuous semigroup $\{e^{\t A}\}_{\t\ge0}$ on $\V$;

\item[2.] $B\in L(U,\V)$; \item[3.] there exists $\omega_0 \ge 0$
such that $\vert e^{\t A}x\vertv\le e^{\omega_0 \t}\vert
x\vertv,~\forall \t\ge0$; \item[4.] $g\in \mathcal{Y}([0,T]\times
\V)$, $t\mapsto\lips g_x(t,\cdot)\lipd\in L^1(0,T)$
\item[5.]$\varphi\in\Sigma_0(\V)$; \item[6.] $h(t,\cdot)$ is
convex, lower semi--continuous, $\partial_u h(t,\cdot)$  is
injective for all $t\in[0,T]$. \item[7.] If is set
$\H(t,u):=[h(\t,\cdot)]^*(u)$, then we assume $\H\in
\mathcal{Y}([0,T]\times U)$, $\H(t,0)=0$, and
$\sup_{t\in[0,T]}\lips \H_u(t,\cdot)\lipd<+\infty.$

\end{enumerate}\end{ass}

\begin{rem}\label{noidentif} We do not identify $V$ and $\V$ for
 in the applications the problem is naturally set in a Hilbert space $H$, such that
$V\subset H\equiv H^\prime\subset\V$ (with all bounded inclusions).
Indeed, in order to avoid the discontinuities due to the presence of $B$,
as they appear in (\ref{eq stato in H})(\ref{J in H}), we work in
the extended state space $\V$ related to $H$ in the following way: $V$ is the Hilbert space
$D(\A^*)$ endowed with the scalar product $(v|w)_V:=
(v|w)_H+(\A^*v|\A^*w)_H$, $\V$ is the dual space of $V$ endowed
with the operator norm. Then assume that $B\in L(U,\V)$, and extend
 the semigroup $\{e^{tA_0}\}_{t\ge0}$ on $H$ to a semigroup
$\{e^{tA}\}_{t\ge0}$ on the space ${V^\prime}$, having infinitesimal
generator $A$, a proper extension of $A_0$. The reader is referred
to \cite{Fa3} for  a detailed treatment. \hfill\qed \end{rem}

\begin{rem} \label{estensioni1} Note that the functions
$g$ and $\phi$ arising from applications usually appear to be
defined and $C^1$ on $H$, not on the larger space $\V$. Then, we
here need to {\it assume} that they can be extended to {\it
$C^1$-regular functions on $\V$} - which is a non trivial issue. We
refer the reader to Section \ref{es eco} to see how such extension
is obtained in the specific case of the economic example, and to
\cite{Fa2} and \cite{Fa3} for a thorough discussion of this
issue.\hfill\qed\end{rem}

\begin{rem}\label{asssuh} In Assumption \ref{asst}[7], we assumed $\H(t,0)=0$. Such assmption
is not restrictive since $\H(t,0)=- \inf_{v\in U}h(t,v)$ and, if
this value is not $0$, we may reduce to this case simply setting
$\bar g=g+\inf_{v\in U}h(t,v)$ and $\bar h =h-\inf_{v\in U}h(t,v)$
and treating the problem with $\bar g$ and $\bar h$ in place of $g$
and $h$.  Note also that the assumption $\partial h(t,\cdot)$
injective is intended to yield a good definition for $\H_u$ as it
is, roughly speaking,  $\H_u=(\partial h)^{-1}$. Note also that once
one has the datum $h$, its convex conjugate $\H$ is very often
explicitly computed. Then the assumptions on $\H$ are essentially
assumptions on its convex conjugate $h$, but more conveniently
stated to ensure $\H$ has the desired properties. \hfill\qed
\end{rem}

\medskip

Such optimal control problem can be associated by means of dynamic
programming, to the following Hamilton-Jacobi-Bellman equation
\begin{equation}\label{HJBb}
\begin{cases}v_{t}(t,x)-\H(t,-B^*v_{x}(t,x))+\langle
  A x\vert  v_{x}(t,x)\rangle+g(t,x)=0, &(t,x)\in[0,T]\times {V^\prime}\\
v(T,x)=\varphi(x),&\\
\end{cases}\end{equation}
that can be written, by the change if variable $v(t,x)=\phi(T-t,x)$,
as
 \begin{equation}\label{HJBf}\begin{cases}\phi_t(t,x)+\H(T-t,-B^*\phi_x(t,x))-\langle
  Ax,\phi_x(t,x)\rangle=g(T-t,x), &(t,x)\in[0,T]\times {V^\prime}\\
\phi(0,x)=\varphi(x).&\\
\end{cases}\end{equation}

Finally, the value function of the problem is defined as
\begin{equation}\label{value}W_T(t,x)=\inf_{u\in L^p(t,T;U)} J_T(t,x,u),\end{equation}

Indeed in \cite{Fa2} Faggian proved existence and uniqueness of
strong solutions, as defined shortly afterwards, for a class of more
general HJB equations, that is

\begin{equation}\label{HJBgen}\begin{cases}\phi_t(t,x)+F(t,\phi_x(t,x))-\langle
  Ax,\phi_x(t,x)\rangle=g(T-t,x), &(t,x)\in[0,T]\times {V^\prime}\\
\phi(0,x)=\varphi(x),&\\
\end{cases}\end{equation}
where $F$ satisfies
\begin{equation}\label{HJBFH} F\in \mathcal{Y}([0,T]\times V),\ \  F(t,0)=0,\ \
\sup_{t\in [0,T]}[ F_p(t,\cdot)]_L<+\infty\end{equation}
Note indeed that if we set
 \begin{equation*}
 F(t,p):=\H(T-t,-B^*p)=\sup_{u\in U}\{( u\vert -B^*p)_U-h(T-t,u)\}.\end{equation*}
then $F$ satisfies (\ref{HJBFH}) and it  is  well defined for $p$ in $V$, to which
 $\phi_x(t,x)$ belongs.

\begin{defi} \label{strongdefi} Let  Assumptions \ref{asst} $1-5$, and $(\ref{HJBFH})$ be satisfied. We say
that   $\phi\in C([0,T],\B_2(\V))$ is a {\rm strong} solution of
$(\ref{HJBgen})$ if there exists a family $\{\phi^\e\}_\e\subset
C([0,T],\B_2(\V))$ such that:

$(i)$ $\phi^\e(t,\cdot)\in \clip(\V)$ and $\phi^\e(t,\cdot)$ is
convex for all $t\in[0,T]$; $\phi^\e(0,x)=\varphi(x)$ for all
$x\in\V$.

$(ii)$ there exist  constants $\Gamma_1,\Gamma_2>0$ such that
$$\sup_{t\in[0,T]}\lips\phi_x^\e(t)\lipd\le \Gamma_1,~
\sup_{t\in[0,T]}\vert\phi_x^\e(t,0)\vert_V\le \Gamma_2,~\forall
\e>0;$$

$(iii)$  for all $x\in D(A)$, $t\mapsto\phi^\e(t,x)$ is continuously
differentiable;

$(iv)$ $\phi^\e\to\phi$, as $\e\to 0+$,
 in $C([0,T],\B_2(\V))$;

$(v)$ there exists  $g_\e\in C([0,T];\B_2(\V))$ such that, for all
$t\in[0,T]$ and $x\in D(A)$,
$$\phi_t^\e(t,x)-F(t,\phi_x^\e(t,x))+\langle Ax,\phi_x^\e(t,x)\rangle=g_\e(T-t,x)$$
with $g_\e(t,x)\to g(t,x)$, and $\int_0^T\vert
g_\e(s)-g(s)\vert_{C_2}ds\to 0$, as $\e\to 0+.$ \end{defi}

The main result contained in \cite{Fa2} is the following.

\begin{theo}\label{exun}
Let Assumptions \ref{asst} $1-5$, and $(\ref{HJBgen})$ be satisfied. There exists a unique
strong solution $\phi$ of $(\ref{HJBf})$ in the class
$C([0,T],\B_2(\V))$ with the following properties:

$(i)$ for all $x\in D(A)$, $\phi(\cdot,x)$ is Lipschitz
continuous;

$(ii)$ $\phi\in\mathcal{Y}([0,T]\times \V)$. Moreover the
following estimate is satisfied for all $t\in[0,T]$
\begin{equation}\label{R(t)}[\phi_x(t)]_L\le
e^{2\omega_0 t} [ \varphi^\prime]_L + \int_0^t e^{2\omega_0 (t-s)}
[ g_x(T-s,\cdot)]_L  ds.\end{equation}

\end{theo}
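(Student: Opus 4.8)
The plan is to adapt the convex-regularization scheme of Barbu and Da Prato to the duality $V$--$\V$ and to the weighted spaces $\B_2(\V)$. The structural remark that drives everything is that the transport term $-\langle Ax,\phi_x\rangle$ is generated by the linear flow $x\mapsto e^{\t A}x$: if $w\in\clip(\V)$ then $u(\t,x):=w(e^{\t A}x)$ satisfies $u_\t-\langle Ax,u_x\rangle=0$ for $x\in D(A)$, since $u_x(\t,x)=(e^{\t A})^*w'(e^{\t A}x)$ and $e^{\t A}A=Ae^{\t A}$ on $D(A)$. By Duhamel's principle I would therefore recast (\ref{HJBf}) as the mild (characteristic) integral identity
\begin{equation}\label{mildHJB}
\phi(t,x)=\varphi(e^{tA}x)+\int_0^t\Big[g(T-s,e^{(t-s)A}x)-F\big(s,\phi_x(s,e^{(t-s)A}x)\big)\Big]\,ds,
\end{equation}
and work throughout with (\ref{mildHJB}).

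First I would regularize the data by Moreau--Yosida/inf-convolution, replacing $\varphi$ by a convex $\varphi_\e\in\clip(\V)$, and $g(t,\cdot)$, $F(t,\cdot)$ by $g_\e$, $F_\e$ with derivatives Lipschitz uniformly in $\e$, preserving convexity and the normalization $F_\e(t,0)=0$, and converging to the original data in the senses demanded by Definition \ref{strongdefi}. For fixed $\e$ I would solve (\ref{mildHJB}) with the smooth data by a fixed-point argument in $C([0,\delta],\B_2(\V))$: the $\B_1$-growth $|F_{\e,p}(t,p)|\le C(1+|p|)$ coming from (\ref{HJBFH}), together with Assumption \ref{asst}[3], makes $\phi\mapsto F_\e(s,\phi_x)$ Lipschitz on bounded sets, yielding a contraction on a short interval $[0,\delta]$ whose length depends only on a priori bounds; iterating extends the solution to $[0,T]$. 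Convexity of $\phi^\e(t,\cdot)$ and the differentiability required in Definition \ref{strongdefi}(iii) then follow from (\ref{mildHJB}) and the convexity of $\varphi_\e,F_\e$.

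The hard part will be the uniform-in-$\e$ bound (\ref{R(t)}) on the Lipschitz constant of the gradient. I would differentiate (\ref{mildHJB}) in $x$ and take increments at two points $x_1,x_2$: the $\varphi$-contribution is $(e^{tA})^*[\varphi_\e'(e^{tA}x_1)-\varphi_\e'(e^{tA}x_2)]$, and using $|e^{\t A}|\le e^{\omega_0\t}$ twice — once for the flow acting on $x_1-x_2$ inside and once for the adjoint acting outside — gives the bound $e^{2\omega_0 t}\lips\varphi_\e'\lipd\,|x_1-x_2|$; the $g$-term yields $\int_0^t e^{2\omega_0(t-s)}\lips g_x(T-s,\cdot)\lipd\,ds$. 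The crucial point is that the $F_\e$-term does not degrade the estimate: since $F_\e(t,\cdot)$ is convex, $F_{\e,p}$ is monotone and its contribution to the increment of $\phi^\e_x$ carries the dissipative sign, so it is discarded in the upper bound — which is exactly why (\ref{R(t)}) contains no term in $F$. (Concretely, the Hamiltonian acts by inf-convolution, which can only decrease curvature.) A Gronwall argument then produces (\ref{R(t)}) uniformly in $\e$, together with uniform $\B_2$-bounds and a bound on $|\phi^\e_x(t,0)|_V$. The genuine technical burden is that all these manipulations are a priori licit only on the dense set $D(A)$ and must be extended to $\V$ by density and shown to survive the passage $\e\to0$.

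With these uniform bounds in hand, I would show $\{\phi^\e\}$ is Cauchy in $C([0,T],\B_2(\V))$ by subtracting the two mild identities and applying Gronwall (again via the Lipschitz-on-bounded-sets property of $F_\e$ and $F_\e\to F$), obtaining a limit $\phi$. By construction the family $\{\phi^\e\}$ verifies Definition \ref{strongdefi}, so $\phi$ is a strong solution; property (i) follows because, for $x\in D(A)$, one has $\phi_t=g-F(t,\phi_x)+\langle Ax,\phi_x\rangle$ bounded, and property (ii) together with (\ref{R(t)}) passes to the limit. Finally, for uniqueness I would first check that any strong solution satisfies the mild identity (\ref{mildHJB}) — by integrating the approximating equation in Definition \ref{strongdefi}(v) along the characteristics and passing to the limit — and then compare two solutions via Gronwall, their gradients being uniformly bounded by (\ref{R(t)}) so that the local Lipschitz bound on $F$ applies.
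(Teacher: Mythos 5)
The first thing to note is that this paper does not actually prove Theorem \ref{exun}: the theorem is quoted from \cite{Fa2}, where the proof is a refinement of the Barbu--Da Prato convex regularization method. Your overall architecture (mild formulation along the characteristics $x\mapsto e^{tA}x$, regularization of the data, uniform gradient bounds, passage to the limit) belongs to that same family, but it has a genuine gap exactly at the step that carries all the weight, namely the estimate (\ref{R(t)}). Differentiating your mild identity in $x$ does not produce the clean formula you describe: the term $F_\e\big(s,\phi_x(s,e^{(t-s)A}x)\big)$ differentiates to $(e^{(t-s)A})^*\,\phi_{xx}(s,\cdot)\,F_{\e,p}(s,\cdot)$, i.e. it brings in the second derivative of the unknown, which nothing in your scheme controls. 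The claim that this term ``carries the dissipative sign and is discarded in the upper bound'' cannot be implemented at this level: monotonicity of $F_{\e,p}$ and positivity of $\phi_{xx}$ are useless inside a plain norm bound of a Duhamel formula (one would need to pair the increment of $\phi_x$ against itself, and the conjugation by $(e^{(t-s)A})^*$ under the time integral destroys any such pairing). Your fallback, ``a Gronwall argument then produces (\ref{R(t)})'', is inconsistent with the statement you want to prove: any Gronwall step in which the Hamiltonian contributes yields an exponential constant depending on $\sup_t\lips F_p(t,\cdot)\lipd$, whereas (\ref{R(t)}) contains no trace of $F$ at all. The way convexity actually enters in \cite{Fa2} (following Barbu--Da Prato) is variational, not PDE-based: for regularized data $\phi^\e$ is the value function of a convex control problem, $\phi^\e_x(t,x)$ is identified with the costate via the maximum principle, and the Lipschitz bound comes from pairing differences of costates with differences of optimal trajectories and using monotonicity of the subdifferentials of the convex data. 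Your proposal never invokes the control representation, and without it the key estimate does not close.

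Two further gaps are worth naming. First, a contraction in $C([0,\delta],\B_2(\V))$ is ill-posed, because your fixed-point map involves $\phi_x$, which the $\B_2$ norm does not see; you must set the iteration in a space of $C^1$ functions with controlled gradients, and proving that the map preserves such a space runs into the same second-derivative obstruction as above. Second, Definition \ref{strongdefi} is stricter than your scheme allows: the approximating family must satisfy $\phi^\e(0,\cdot)=\varphi$ exactly (clause (i)) and must solve the equation with the exact Hamiltonian $F$, only the right-hand side $g_\e$ being perturbed (clause (v)). A family built by mollifying $\varphi$ and $F$, as you propose, verifies neither clause, so even if your construction converged it would not, as stated, exhibit the limit as a strong solution in the sense required by the theorem.
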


Regarding applications to the optimal control problem, in
\cite{Fa3} we were able to prove what follows.

\begin{theo}\label{verif}
Let Assumptions \ref{asst} $1-7$ be satisfied, and let $\phi$ be the strong
  solution of $(\ref{HJBf})$ described in Theorem \ref{exun}. Then
  $$W_T(t,x)=\phi(T-t,x),~\forall t\in[0,T],~\forall x\in\V,$$
  that is,
  the value function $W_T$ of the optimal control problem is the
  unique strong solution of the backward HJB equation $(\ref{HJBb})$.
\end{theo}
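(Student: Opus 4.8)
The plan is to prove the two inequalities $\phi(T-t,x)\le W_T(t,x)$ and $\phi(T-t,x)\ge W_T(t,x)$ separately, using the approximating family $\{\phi^\e\}$ supplied by the notion of strong solution (Definition \ref{strongdefi}) together with a Fenchel--Young duality between $h$ and $\H$.

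First I would establish a fundamental inequality valid for \emph{every} admissible control. Fix $u\in L^p(t,T;U)$, let $y$ be the corresponding mild trajectory (\ref{mild}), and consider the map $\t\mapsto\phi^\e(T-\t,y(\t))$. Differentiating along the trajectory and inserting the approximate equation of Definition \ref{strongdefi}(v) (so that the two occurrences of $\langle A y(\t),\phi_x^\e\rangle$ cancel), I obtain
$$\frac{d}{d\t}\phi^\e(T-\t,y(\t)) = F(T-\t,\phi_x^\e)+\langle Bu(\t),\phi_x^\e\rangle-g_\e(\t,y(\t)).$$
Since $F(T-\t,p)=\H(\t,-B^*p)=\sup_{v}\{-\langle Bv,p\rangle-h(\t,v)\}$, the Fenchel--Young inequality gives $F(T-\t,\phi_x^\e)+\langle Bu(\t),\phi_x^\e\rangle\ge -h(\t,u(\t))$, so the derivative is bounded below by $-h(\t,u(\t))-g_\e(\t,y(\t))$. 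Integrating over $[t,T]$, using $\phi^\e(0,\cdot)=\varphi$ and $y(t)=x$, and then letting $\e\to0$ via the convergences (iv)--(v) and the uniform bounds (ii) yields $\phi(T-t,x)\le J_T(t,x,u)$, hence $\phi(T-t,x)\le W_T(t,x)$.

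For the reverse inequality I would construct an optimal control in closed loop. Equality in Fenchel--Young forces $-B^*\phi_x(T-\t,y(\t))\in\partial_u h(\t,u(\t))$, i.e. $u^*(\t)=\H_u(\t,-B^*\phi_x(T-\t,y^*(\t)))$, where Assumption \ref{asst}[6]--[7] guarantee that $\H_u=(\partial_u h)^{-1}$ is well defined and Lipschitz. I would solve the resulting closed--loop equation $y^*(\t)=e^{(\t-t)A}x+\int_t^\t e^{(\t-\sigma)A}B\,\H_u(\sigma,-B^*\phi_x(T-\sigma,y^*(\sigma)))\,d\sigma$ by a contraction argument on a small interval, iterated to cover $[t,T]$, exploiting the uniform Lipschitz bound on $\phi_x$ from estimate (\ref{R(t)}) and on $\H_u$. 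Verifying $u^*\in L^p(t,T;U)$ and repeating the identity above with equality (in the $\e\to0$ limit) gives $\phi(T-t,x)=J_T(t,x,u^*)\ge W_T(t,x)$, which closes the argument and simultaneously produces the optimal feedback.

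The main obstacle is analytical rather than conceptual. Since the trajectory $y(\t)$ a priori only lies in $\V$, while the approximate equation (v) is guaranteed only on $D(A)$, the chain rule leading to the fundamental identity is not immediately licit. I expect to need a further regularization---approximating $x$ by data in $D(A)$ and $u$ by smoother controls, or replacing $A$ by its Yosida approximation $A_n=nA(nI-A)^{-1}$, so that the intermediate trajectories stay in $D(A)$---and then to pass to a double limit, first in the regularization and then in $\e$. Controlling these limits uniformly, together with proving well--posedness of the closed--loop equation for the reverse inequality, are the delicate points; the uniform bounds (ii) and the estimate (\ref{R(t)}) of Theorem \ref{exun} are precisely what make them tractable.
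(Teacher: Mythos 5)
Your proposal is correct, and it follows essentially the canonical route: the paper itself does not prove Theorem \ref{verif} (it is quoted from \cite{Fa3}), but the verification argument the paper does give for the infinite-horizon analogue (Theorems \ref{th:verif} and \ref{th:uniquefeedback}) is exactly your scheme --- differentiate the (approximate) solution along an arbitrary trajectory, cancel the $\langle Ay,\phi_x\rangle$ terms, bound the Hamiltonian gap by Fenchel--Young to get $\phi\le W_T$, then achieve equality via the Lipschitz closed-loop feedback $u^*=\H_u(\cdot,-B^*\phi_x)$, with the $D(A)$-regularity issue handled by approximating the initial datum and the control and passing to the limit, as you indicate. No gap.
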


\bigskip

\section{The infinite horizon problem}\label{mainsec}
We describe the abstract setup of the infinite
horizon optimal control problem and we state the main result of
the paper, namely Theorem \ref{th:mainnew}, that establishes that the
value function of our problem is the unique regular solution of
the associated HJB equation. Some other important
results follow, such as Theorem \ref{th:uniquefeedback}
on existence and uniqueness of optimal
feedbacks, and Theorem \ref{main}, establishing the connection
between finite and infite horizon value functions. Proofs of all assertions are found in section \ref{4}.

We use the same framework as that  in Section \ref{preliminari},
for the finite horizon problem. As one expects, the state
space is $\V$ and the control space is $U$. The state equation is
given in $\V$ as
\begin{equation}y(\t)=e^{(\t-t)A}x+\int_t^\t
e^{(\t-\sigma)A}Bu(\sigma)d\sigma,\ \ \tau\in[t,+\infty[ ,\
\end{equation}
while,
for all $x\in\V$ and $t>0$, the target functional is of type
\begin{equation}\label{J-t-T}J_\infty(t,x,u):=\int_t^{+\infty}e^{-\lambda
\tau}[g_0(y(\tau))+h_0(u(\tau))]d\tau.\end{equation}
We assume the following hypotheses:

\begin{ass}\label{asst2}
\begin{enumerate}

\item[1.] $A:D(A)\subset\V\to\V$ is the infinitesimal generator of
a strongly continuous semigroup $\{e^{\t A}\}_{\t\ge0}$ on $\V$;

\item[2.] $B\in L(U,\V)$;

\item[3.] there exists $\omega\in \mathbb{R}$ such that $\vert
e^{\t A}x\vertv\le  e^{\omega \t}\vert x\vertv,~\forall \t\ge0$;

\item[4.]  $g_0, \phi_0\in\Sigma_0(\V)$

\item[5.] $h_0$ is  convex, lower semi--continuous,
$\partial_u h_0$  is  injective.

\item[6.] $h_0^*(0)=0$, $h_0^*\in \Sigma_0(V)$;
\item[7.] $\exists a>0$,  $\exists b\in \mathbb{R}$, $\exists p>1$ : $h_0(u)\ge a\vert u\vert_U^p+b$,
$\forall u\in U$;

Moreover, either
\item[8.a] $p> 2$, $\lambda>(2\omega\vee\omega)$.

 or
 \item[8.b] $\lambda>\omega$,  and $g_0,\phi_0 \in \B_1(\V).$
\end{enumerate}\end{ass}
\begin{rem}Note that the Assumption \ref{asst2} [3]
above implies that also Assumption \ref{asst} [3] where
$\omega_0=\omega\vee0$.\end{rem} The functional $J_\infty(t;x,u)$
has to be minimized with respect to $u$ over the set of admissible
controls
\begin{equation}\label{admiss} L^p_\lambda(t,+\infty;U)=\{u\in L^1_{loc}(t,+\infty;U)
\ ;\ t\mapsto u(t)e^{-\frac{\lambda t}{p}}\in L^p(t,+\infty;U)\},
\end{equation}
which is Banach space with the norm $$
\|u\|_{L^p_\lambda(t,+\infty;U)}=\int_t^{+\infty} |u(\tau)|_U^p
e^{-\lambda \tau}d\tau = \|e^{-\frac{\lambda (\cdot)
}{p}}u\|_{L^p(t,+\infty;U)}.
$$
Similarly, the space $L^p_\lambda(t,s;U)$ endowed with the norm
$$
\|u\|_{L^p_\lambda(t,s;U)}=\int_t^{s} |u(\tau)|_U^p e^{-\lambda
\tau}d\tau = \|e^{-\frac{\lambda (\cdot) }{p}}u\|_{L^p(t,s;U)}.
$$
is a Banach space. Then (\ref{admiss}) is the natural set of admissible controls
 to get estimates in this setting (see e.g Lemma
\ref{stima y} and Lemma \ref{OC sono limitati}).

 The value function is then defined as
$$Z_\infty(t,x)=\inf_{u\in L^p_\lambda(t,+\infty;U)}J_\infty(t,x,u).$$  As it is easy to
check that
$$Z_\infty(t,x)=e^{-\lambda t}Z_\infty(0,x)$$
one may associate to the problem the following stationary HJB
equation
\begin{equation}\label{SHJB}-\lambda \psi(x)+\langle
\psi^\prime(x), A
x\rangle-h_0^*(-B^*\psi^\prime(x))+g(x)=0,\end{equation} whose
candidate solution is the function $Z_\infty(0,\cdot)$.

We will use the following definition of solution for equation
(\ref{SHJB}).

\begin{defi}\label{defsolSHJB}
A function $\psi$ is a \textrm{classical} solution of the
stationary HJB equation $(\ref{SHJB})$ if it belongs to $\Sigma_0
(\V)$ and satisfies $(\ref{SHJB})$ pointwise for every $x\in D(A)$.
\end{defi}

\begin{rem} \label{estensioni}  The reader has
certainly realized that Assumptions \ref{asst2} $[1-7]$ imply
 Assumptions \ref{asst} $[1-7]$.
 Moreover, as mentioned thoroughly  in Remark \ref{estensioni1}, we
need to assume that the functions $g_0$ and $\phi_0$  can be
extended to $C^1$-regular functions on $\V$.\hfill\qed\end{rem}

\begin{rem} \label{strettaconvessita} See Remark \ref{asssuh} for some
comments on $h_0$ and $h_0^*$ that apply also to this
case.\hfill\qed\end{rem}

Before proving that the value
function of the infinite horizon problem starting at $(0,x)$,
namely $Z_\infty(0,x)$, is the unique classical solution to the
stationary HJB equation, some preliminary work is needed.
First we show that $Z_\infty(0,x)$ is the limit as $t$ tends to
$+\infty$ of a suitable family of value functions for finite
horizon, along with their gradients. Doing so, we also
establish that $Z_\infty$ inherits from that family the $C^1$
regularity in $x$ which we need to solve the stationary HJB
equation, and which is so precious when building optimal feedback
maps.

\begin{theo}\label{main}
Let Assumptions \ref{asst2} be satisfied.
Let also $\phi_T(t,x)$
be the unique strong solution to $(\ref{HJBf})$. Then the function
$$\Psi(t,x):=e^{\lambda(T-t)}\phi_T(t,x)$$ is independent of $T$
and there exists the following limit
$$\Psi_\infty(x):=\lim_{t\to+\infty}\Psi(t,x).$$
The convergence is uniform on bounded subsets of $\V$. Moreover, if
$\lambda>\omega\max\{2, \frac{p}{p-1}\}$, then $\Psi_\infty \in
\Sigma_0 (\V)$. Moreover, for every fixed $x\in \V$
$$\Psi_x(t,x)\to\Psi^\prime_\infty(x), \ {\rm weakly\ in\ }V,\  {\rm as\ \ }t\to+\infty.$$
 \end{theo}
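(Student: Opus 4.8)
The plan is to reduce the whole statement to the finite horizon value functions of Theorem \ref{verif} and then pass to the limit $t\to+\infty$, using convexity and the gradient estimate (\ref{R(t)}) throughout. Here $\phi_T$ is the strong solution of (\ref{HJBf}) for the \emph{discounted} finite horizon data $g(\t,x)=e^{-\lambda\t}g_0(x)$, $\varphi(x)=e^{-\lambda T}\phi_0(x)$, $\H(\t,\cdot)=[e^{-\lambda\t}h_0]^{*}$, with value function $W_T(s,x)=\inf_u[\int_s^T e^{-\lambda\t}(g_0(y)+h_0(u))\,d\t+e^{-\lambda T}\phi_0(y(T))]$. \emph{Independence of $T$:} by Theorem \ref{verif}, $\phi_T(t,x)=W_T(T-t,x)$; the autonomy of $\{e^{\t A}\}$ and the product structure of the discount give, after the change of variable $\sigma=\t-s$ in (\ref{J-t-T}) and factoring out $e^{-\lambda s}$, the scaling $W_T(s,x)=e^{-\lambda s}W_{T-s}(0,x)$. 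Hence, with $s=T-t$, \[\Psi(t,x)=e^{\lambda(T-t)}\phi_T(t,x)=e^{\lambda s}W_T(s,x)=W_{T-s}(0,x)=W_t(0,x),\] manifestly independent of $T$, and it remains to analyse $t\mapsto W_t(0,x)$ as $t\to+\infty$.

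\emph{Existence of the limit} is the main obstacle, since $g_0,h_0$ are not bounded below and a monotonicity argument is unavailable. I would instead show $t\mapsto\Psi(t,x)$ is Cauchy. For $t>s$ the dynamic programming principle together with the scaling above yields $\Psi(t,x)=\inf_u[\int_0^s e^{-\lambda\sigma}(g_0(y)+h_0(u))\,d\sigma+e^{-\lambda s}\Psi(t-s,y(s))]$, while $\Psi(s,x)$ has the same form with $\phi_0$ replacing $\Psi(t-s,\cdot)$. Inserting into each problem the optimal control of the other (whose existence and uniform $L^p_\lambda$-bound come from the coercivity $h_0(u)\ge a|u|_U^p+b$, Lemma \ref{OC sono limitati}) and using the uniform $\B_2$-bound on the family $\{\Psi(\theta,\cdot)\}_\theta$ --- itself the two-sided estimate, with lower bound from the coercivity plus the affine minorants of the convex $g_0,\phi_0$, and upper bound from a constant competitor control --- I obtain \[|\Psi(t,x)-\Psi(s,x)|\le C\,e^{-\lambda s}\,(1+|y(s)|_{\V}^2)\] for the relevant near-optimal endpoints $y(s)$. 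The state estimate (Lemma \ref{stima y}) bounds $|y(s)|_{\V}^2$ by $C(1+|x|_{\V}^2)$ times a combination of $e^{2\omega s}$ and $e^{2\lambda s/p}$, so the factor $e^{-\lambda s}$ drives the right-hand side to $0$ under Assumption \ref{asst2}[8.a] or [8.b]. This makes $\Psi_\infty(x)$ exist, with the estimate uniform for $x$ in bounded sets, which also gives the uniform convergence there.

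\emph{Regularity.} Convexity of $\Psi_\infty$ is inherited as a pointwise limit of convex functions, and the uniform $\B_2$-bound passes to the limit, so $\Psi_\infty\in\B_2(\V)$. For the gradient, since $\Psi_x(t,x)=e^{\lambda(T-t)}(\phi_T)_x(t,x)$, estimate (\ref{R(t)}) applied to the discounted data gives \[[\Psi_x(t,\cdot)]_L\le e^{(2\omega_0-\lambda)t}[\phi_0']_L+[g_0']_L\int_0^t e^{(2\omega_0-\lambda)r}\,dr,\] which is bounded uniformly in $t$ whenever $\lambda>2\omega_0=2(\omega\vee0)$ --- guaranteed by $\lambda>\omega\max\{2,p/(p-1)\}$. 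Together with a uniform bound on $|\Psi_x(t,0)|_V$, this shows the gradients $\Psi_x(t,\cdot)$ are equi-Lipschitz ($V$-valued) and equibounded on bounded sets, with $K:=\sup_t[\Psi_x(t,\cdot)]_L<+\infty$.

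\emph{Gradient convergence and conclusion.} Fix $x$. The equi-bounds make $\{\Psi_x(t,x)\}_t$ bounded in the Hilbert space $V$, hence weakly relatively compact; let $\xi$ be a weak limit along $t_k\to+\infty$. Convexity gives $\Psi(t_k,y)\ge\Psi(t_k,x)+\langle\Psi_x(t_k,x),y-x\rangle$, and passing to the limit (pointwise convergence of $\Psi$, weak convergence of the gradients tested on the fixed $y-x\in\V$) shows $\xi\in\partial\Psi_\infty(x)$. The equi-Lipschitz gradient bound also yields the reverse inequality $\Psi(t_k,y)\le\Psi(t_k,x)+\langle\Psi_x(t_k,x),y-x\rangle+\frac{K}{2}|y-x|_{\V}^2$, which in the limit squeezes $\Psi_\infty(x+h)-\Psi_\infty(x)-\langle\xi,h\rangle$ between $0$ and $\frac{K}{2}|h|_{\V}^2=o(|h|_{\V})$; hence $\Psi_\infty$ is Fréchet differentiable at $x$ with $\Psi_\infty'(x)=\xi$. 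As every subsequential weak limit equals this unique value, the whole family satisfies $\Psi_x(t,x)\rightharpoonup\Psi_\infty'(x)$ weakly in $V$; weak lower semicontinuity of the norm transfers the bound $K$ to $\Psi_\infty'$, so $\Psi_\infty\in\clip(\V)$ and therefore $\Psi_\infty\in\Sigma_0(\V)$. I expect the two delicate points to be the two-sided endpoint estimate of the second paragraph (where the precise relation between $\lambda,\omega,p$ is forced) and this last squeeze, which must produce differentiability and weak gradient convergence \emph{simultaneously}, so as not to circularly presuppose the $C^1$ regularity of $\Psi_\infty$.
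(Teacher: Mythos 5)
Your treatment of the first two parts tracks the paper's own proof closely: the identity $\Psi(t,x)=W_t(0,x)$ via the scaling $W_T(s,x)=e^{-\lambda s}W_{T-s}(0,x)$ is exactly the paper's preliminary Lemma (formula (\ref{Psi})), and your Cauchy estimate is precisely the paper's Claims 1--3, which prove the two one-sided inequalities $\Psi(t_1,x)-\Psi(t_2,x)\le Ce^{-\lambda t_1}(1+\vert y_\e(t_1)\vert_{\V}^2)+\e$ (and its reverse) via the DPP (\ref{PPDT}), $\e$-optimal controls, the coercivity bound of Lemma \ref{OC sono limitati} and the two-sided growth bound of Lemma \ref{stima psi}; your exponent bookkeeping ($e^{-\lambda s}$ against $e^{2\omega s}$ and $e^{2\lambda s/p}$) is the paper's Claim 1, where $p>2$ in [8.a] is consumed. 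Where you genuinely diverge is the differentiability step. The paper fixes $x$, $y$ and applies Ascoli--Arzel\`a to the one-dimensional sections $\xi_t(h)=\Psi(t,x+hy)$, $h\in[-1,1]$: equi-Lipschitzianity and equiboundedness of $\xi_t^\prime$ give uniform convergence of derivatives along the sections, hence Gateaux differentiability of $\Psi_\infty$ and weak convergence of gradients, and Fr\'echet regularity is recovered afterwards from the uniform Lipschitz bound on $\nabla\Psi_\infty$. You instead use weak compactness of $\{\Psi_x(t,x)\}_t$ in the Hilbert space $V$, identify every weak limit point as a subgradient of $\Psi_\infty$ at $x$, and squeeze with the quadratic upper Taylor bound coming from the equi-Lipschitz gradients, obtaining Fr\'echet differentiability and uniqueness of the weak limit simultaneously. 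Both arguments are valid and consume exactly the same two uniform estimates; yours is arguably more direct, since it never passes through Gateaux differentiability plus continuity of the Gateaux differential.

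There is, however, one ingredient you assert but never establish: the uniform bound $\sup_{t\ge0}\vert\Psi_x(t,0)\vert_V<+\infty$. This does not follow from (\ref{R(t)}), which controls only the Lipschitz seminorm $\lips\Psi_x(t)\lipd$; without the bound at a single point, equiboundedness of $\{\Psi_x(t,x)\}_t$ in $V$ fails and your weak-compactness argument cannot start (the same lacuna would also affect the paper's Ascoli--Arzel\`a route). In the paper this is part $(ii)$ of Lemma \ref{stimegrad}, and it requires a genuine argument: one bounds the difference quotients $(\Psi(t,hz)-\Psi(t,0))/h$ from above and below using $\e$-optimal controls at $(0,0)$ and at $(0,hz)$, the state estimate (\ref{y-stima-1}), the uniform $L^p_\lambda$ bound of Lemma \ref{OC sono limitati}, and the boundedness (uniform in $t$) of $e^{-(\lambda-2\omega)t}\theta(0,t)^{1/q}$ and of $\int_0^t e^{-(\lambda-2\omega)s}\theta(0,s)^{1/q}ds$ --- and it is precisely here, together with the Lipschitz estimate, that the hypothesis $\lambda>\omega\max\{2,p/(p-1)\}$ is spent. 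You should supply this argument (or an equivalent one) for your proof to close; everything else in your proposal is sound.
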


Hence, $\Psi_\infty$ being  the candidate solution to the
stationary HJB equation (\ref{SHJB}), one shows what follows.

\begin{theo}\label{th:mainnew} Let Assumptions \ref{asst2} hold. Then:

$(i)$ $\Psi_\infty$ is the value function of the infinite
horizon problem with initial time $t=0$, that is
$$\Psi_\infty(x)=Z_\infty(0,x)=\inf_{u\in L^p_\lambda(0,+\infty;U)}J_\infty(0,x,u).$$
Moreover $Z_\infty(t,x)=e^{-\lambda t}\Psi_\infty(x)$;

$(ii)$  $\Psi_\infty$ is a classical solution (as
defined in Definition \ref{defsolSHJB}) of the stationary
Hamilton-Jacobi-Bellman equation $(\ref{SHJB})$. that is
$$-\lambda \Psi_{\infty}(x)+\langle \Psi_{\infty}^\prime(x), Ax\rangle-h_0^*(-B^*
\Psi_{\infty}^\prime(x))+g(x)=0.$$

$(iii)$ The  function $\Psi_\infty$ is the unique classical solution
to $(\ref{SHJB})$.
\end{theo}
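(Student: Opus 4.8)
The plan is to prove Theorem \ref{th:mainnew} in three linked stages, using the finite-horizon machinery of Theorems \ref{exun} and \ref{verif} together with the convergence result of Theorem \ref{main} as the backbone. The central technical object is the family $\Psi(t,x)=e^{\lambda(T-t)}\phi_T(t,x)$, which by Theorem \ref{main} is independent of $T$ and converges, along with its gradient (weakly in $V$), to $\Psi_\infty\in\Sigma_0(\V)$.

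For part $(i)$, the plan is to establish the two inequalities $\Psi_\infty(x)\le Z_\infty(0,x)$ and $\Psi_\infty(x)\ge Z_\infty(0,x)$ separately, which is exactly the ``two-sided inequality'' flagged as a difficulty in the introduction. First I would relate the finite-horizon value function to the infinite-horizon one. By Theorem \ref{verif}, $\phi_T(T-t,x)=W_T(t,x)$ is the minimum over $L^p(t,T;U)$ of a finite-horizon cost, and after the discounting change of variables one checks that $\Psi(t,x)=e^{\lambda(T-t)}\phi_T(t,x)$ equals the value of a finite-horizon \emph{discounted} problem on $[0,T-t]$ with terminal cost $e^{-\lambda(T-t)}\phi_0$. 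For the $\le$ direction, I would take any admissible $u\in L^p_\lambda(0,+\infty;U)$, truncate it to $[0,S]$, compare the finite-horizon cost to the infinite-horizon cost, and pass to the limit $S\to+\infty$, using Assumption \ref{asst2}[7] (coercivity $h_0(u)\ge a|u|_U^p+b$) to control the tail and the growth conditions in \ref{asst2}[8.a]/[8.b] to handle the discounted running cost of $g_0$. For the $\ge$ direction I would produce, for each finite horizon, a near-optimal control (the optimal control of the finite-horizon problem, which exists by convexity and coercivity), show via Lemma-type estimates (the quoted Lemma \ref{stima y} and Lemma \ref{OC sono limitati}) that these controls are uniformly bounded in $L^p_\lambda$, extract a weakly convergent subsequence, and use lower semicontinuity of the convex cost to show the limit control is admissible and attains a value no larger than $\Psi_\infty(x)$. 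The identity $Z_\infty(t,x)=e^{-\lambda t}\Psi_\infty(x)$ then follows from the already-noted scaling $Z_\infty(t,x)=e^{-\lambda t}Z_\infty(0,x)$.

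For part $(ii)$, the plan is to pass to the limit in the finite-horizon HJB equation. By Theorem \ref{exun}, $\phi_T$ solves $(\ref{HJBf})$; substituting $\phi_T(t,x)=e^{-\lambda(T-t)}\Psi(t,x)$ and computing $\partial_t$, one obtains that for each $x\in D(A)$ the function $\Psi(\cdot,x)$ satisfies an evolution equation whose stationary form is precisely $(\ref{SHJB})$. I would then fix $x\in D(A)$ and let $t\to+\infty$. The terms $\lambda\Psi(t,x)\to\lambda\Psi_\infty(x)$ and $g(x)$ are immediate from Theorem \ref{main}; the pairing $\langle Ax,\Psi_x(t,x)\rangle\to\langle Ax,\Psi_\infty'(x)\rangle$ follows from the \emph{weak} convergence $\Psi_x(t,x)\rightharpoonup\Psi_\infty'(x)$ in $V$ (this is why that mode of convergence is singled out in Theorem \ref{main}, since $Ax\in\V$ pairs against $V$); and the Hamiltonian term $h_0^*(-B^*\Psi_x(t,x))\to h_0^*(-B^*\Psi_\infty'(x))$ requires continuity of $u\mapsto h_0^*(-B^*u)$ along the weakly convergent gradients, which one gets from $h_0^*\in\Sigma_0(V)\subset\clip(V)$ (Assumption \ref{asst2}[6], giving local Lipschitz continuity) together with the uniform gradient bounds. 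The remaining subtlety is that the $\partial_t$-derivative term must vanish in the limit; I would argue this using that $\Psi(\cdot,x)$ is monotone/convergent so its difference quotients tend to zero, or equivalently rewrite the equation so that no stray time-derivative survives after the substitution.

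For part $(iii)$, uniqueness, the plan is a verification argument. Suppose $\tilde\psi\in\Sigma_0(\V)$ is any classical solution of $(\ref{SHJB})$. Along any admissible trajectory-control pair $(y,u)$ I would differentiate $t\mapsto e^{-\lambda t}\tilde\psi(y(t))$, use the HJB equation and the Legendre--Fenchel inequality $h_0^*(-B^*\tilde\psi'(y))\ge(-B^*\tilde\psi'(y)\mid u)_U-h_0(u)$ to obtain the fundamental identity showing $\tilde\psi(x)\le J_\infty(0,x,u)$ for all $u$, with equality exactly when $u$ is chosen from the feedback $u^*=(\partial h_0)^{-1}(-B^*\tilde\psi'(y))$ (well defined by the injectivity in Assumption \ref{asst2}[5]). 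This forces $\tilde\psi(x)=Z_\infty(0,x)=\Psi_\infty(x)$. The main obstacle, and the place where care is genuinely needed, is the verification computation in the unbounded-$B$, nonanalytic-semigroup setting: the chain rule for $t\mapsto\tilde\psi(y(t))$ is not available directly because $y(t)$ need not lie in $D(A)$ and $Ax$ pairs only through the $\V$--$V$ duality. I expect to resolve this by a regularization/approximation argument — approximating the trajectory by smoother ones (e.g. via the resolvent $(\text{stuff})$ or by first proving the identity for $x\in D(A)$ and smooth controls, then extending by the density and continuity built into $\Sigma_0(\V)$ and the estimate $(\ref{R(t)})$) — exactly the kind of ``sharp refinement'' the introduction warns is needed here.
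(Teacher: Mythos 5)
Your parts (i) and (iii) are essentially in order. For (i), your $\le$ direction is the paper's argument (truncate, show $J_t(0,x,u)\to J_\infty(0,x,u)$, pass to the limit in $\Psi(t,x)\le J_t(0,x,u)$). For the reverse inequality the paper does not use weak compactness and lower semicontinuity: it concatenates an $\e$-optimal control for $J_t$ on $[0,t]$ with an $\e$-optimal control for $J_\infty$ starting from $(0,y_t(t))$, and estimates $Z_\infty(0,x)\le \Psi(t,x)+Ce^{-\lambda t}(1+\vert y_t(t)\vert_{\V}+\vert y_t(t)\vert_{\V}^2)+2\e$, letting $t\to+\infty$. Your compactness/lsc route can be made rigorous (the functionals are convex, the uniform bounds are Lemma \ref{OC sono limitati}, and Fatou applies thanks to Lemma \ref{stima y}), but it requires an extra weak-lower-semicontinuity argument that the paper's patching trick avoids. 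Part (iii) is the same verification argument the paper uses, including the approximation for $x\notin D(A)$; note only that you must also justify, as the paper does by referring to the proof of Theorem \ref{th:uniquefeedback}, that the feedback control built from $\tilde\psi$ is admissible, i.e. lies in $L^p_\lambda(0,+\infty;U)$.

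The genuine gap is in part (ii). First, $\phi_T$ is only a \emph{strong} solution of $(\ref{HJBf})$ in the sense of Definition \ref{strongdefi} (a limit of approximating solutions), so it does not satisfy the PDE pointwise and you cannot simply ``substitute and compute $\partial_t$''. Second, even granting an a.e.-in-$t$ equation for $\Psi(\cdot,x)$, both of your limiting steps fail as stated: (a) convergence of $\Psi(t,x)$ as $t\to+\infty$ does \emph{not} imply $\partial_t\Psi(t,x)\to 0$ (a convergent function can oscillate with large derivative; at best a mean-value-theorem argument yields a sequence $t_n\to+\infty$ along which the derivative vanishes); (b) more seriously, the Hamiltonian term: Theorem \ref{main} gives only \emph{weak} convergence $\Psi_x(t,x)\rightharpoonup\Psi_\infty^\prime(x)$ in $V$, and a locally Lipschitz, even $\clip$, function such as $p\mapsto h_0^*(-B^*p)$ is \emph{not} sequentially weakly continuous (think of the norm on a weakly null sequence of unit vectors); there is no compactness assumption on $B^*$ to upgrade the convergence, so from convexity you only get a $\liminf$ inequality, hence at most one of the two inequalities in $(\ref{SHJB})$. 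This is exactly why the paper never passes to the limit in the evolution equation: it first proves (i), then derives the dynamic programming principle for $\Psi_\infty$ (Lemma \ref{DPP}) and obtains $(\ref{SHJB})$ by letting $t\to 0^+$ at fixed $x\in D(A)$ — the inequality $\ge 0$ by plugging constant controls $\bar u\in dom(h_0)$ into the DPP and then taking the infimum over $\bar u$, and the inequality $\le 0$ by using $\e t$-optimal controls together with the estimate $(\ref{silvia})$ and the limit
\begin{equation*}
\frac{1}{t}\int_0^t e^{-\lambda s}\, h_0^*\!\left(-e^{-\lambda (t-s)} B^* e^{(t-s)A^*}\Psi^\prime_\infty(x)\right)ds \longrightarrow h_0^*(-B^*\Psi^\prime_\infty(x)),
\end{equation*}
which uses only \emph{strong} continuity of $h_0^*$ evaluated along strongly convergent arguments (the semigroup applied to the fixed vector $\Psi^\prime_\infty(x)$). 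To repair your part (ii) you would have to replace the limit-in-the-equation argument by this DPP argument, or else prove strong convergence of the gradients, which is not available.
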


Once we have established that $\Psi_\infty$ is the classical
solution to the stationary HJB equation, and that it is
differentiable, we can build optimal feedbacks and prove the
following theorem.

\begin{theo}\label{th:uniquefeedback}
Let Assumptions \ref{asst2} hold. Let $t\ge 0$ and $x\in \V$ be
fixed. Then there exists a unique optimal pair $(u^*,y^*)$.
The optimal state $y^*$ is the unique solution of the Closed Loop
Equation
\begin{equation}\label{CLE} y(\t)=e^{(\t-t)A}x+\int_t^\t
e^{(\t-\sigma)A}B(h_0^*)'(-B^*\Psi^\prime_\infty(y(s))) d\sigma,\
\ \tau\in[t,+\infty[ .\
\end{equation}
while the optimal control $u^*$ is given by the feedback formula
$$
u^*(s) = (h_0^*)'(-B^*\Psi^\prime_\infty(y^*(s))).
$$
\end{theo}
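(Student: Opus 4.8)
The plan is to prove Theorem \ref{th:uniquefeedback} by combining the classical solution property of $\Psi_\infty$ (from Theorem \ref{th:mainnew}) with a verification argument, following the two-step scheme of \emph{verification} (any admissible control does no better than the feedback) and \emph{synthesis} (the feedback is admissible and attains the value). Since $Z_\infty(t,x)=e^{-\lambda t}\Psi_\infty(x)$, it suffices to work at $t=0$ and rescale. The key algebraic identity underlying everything is that, because $\Psi_\infty$ is a classical solution of (\ref{SHJB}), for every $x\in D(A)$
\begin{equation*}
-\lambda\Psi_\infty(x)+\langle\Psi_\infty'(x),Ax\rangle
=h_0^*(-B^*\Psi_\infty'(x))-g(x),
\end{equation*}
and the Legendre–Fenchel relation gives, for any $u\in U$,
\begin{equation*}
h_0^*(-B^*\Psi_\infty'(x))\ge (u\mid -B^*\Psi_\infty'(x))_U-h_0(u),
\end{equation*}
with equality precisely when $u=(h_0^*)'(-B^*\Psi_\infty'(x))$. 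This is the pointwise inequality that, integrated against an arbitrary trajectory, yields the verification inequality; the case of equality singles out the feedback.

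First I would carry out the verification half. Fix an admissible $u\in L^p_\lambda(0,+\infty;U)$ with mild trajectory $y(\cdot)$, and compute $\frac{d}{ds}\bigl[e^{-\lambda s}\Psi_\infty(y(s))\bigr]$ along the trajectory. Formally this derivative equals $e^{-\lambda s}\bigl[-\lambda\Psi_\infty(y(s))+\langle\Psi_\infty'(y(s)),Ay(s)+Bu(s)\rangle\bigr]$; substituting the HJB identity and the Fenchel inequality produces
\begin{equation*}
\frac{d}{ds}\bigl[e^{-\lambda s}\Psi_\infty(y(s))\bigr]
\ge -e^{-\lambda s}\bigl[g_0(y(s))+h_0(u(s))\bigr],
\end{equation*}
with equality iff $u(s)=(h_0^*)'(-B^*\Psi_\infty'(y(s)))$ a.e. Integrating over $[0,R]$, using $\Psi_\infty\in\B_2(\V)$ together with the trajectory estimate (Lemma \ref{stima y}) to kill the boundary term $e^{-\lambda R}\Psi_\infty(y(R))$ as $R\to+\infty$, gives $\Psi_\infty(x)\le J_\infty(0,x,u)$ for every admissible $u$, hence $\Psi_\infty(x)\le Z_\infty(0,x)$. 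Since Theorem \ref{th:mainnew}(i) already identifies $\Psi_\infty(x)=Z_\infty(0,x)$, the content here is really the equality case, which forces any optimizer to obey the feedback law pointwise.

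Next comes synthesis: I would show the Closed Loop Equation (\ref{CLE}) has a unique admissible solution $y^*$. The integrand $s\mapsto B(h_0^*)'(-B^*\Psi_\infty'(y(s)))$ is, by Assumptions \ref{asst2}[2,6] and $\Psi_\infty\in\clip(\V)$, a Lipschitz map of $y$ into $\V$ (composition of the bounded operators $B,B^*$, the Lipschitz gradient $\Psi_\infty'$, and the Lipschitz map $(h_0^*)'=\H_u$, which is Lipschitz by the $\clip$ regularity of $h_0^*\in\Sigma_0(V)$). Local existence and uniqueness then follow from a fixed-point/contraction argument in the mild-solution space on $[0,R]$ exactly as for (\ref{mild}); global existence and admissibility of $u^*(s)=(h_0^*)'(-B^*\Psi_\infty'(y^*(s)))$ in $L^p_\lambda$ come from the a priori estimate of Lemma \ref{stima y} together with the superlinear coercivity bound $h_0(u)\ge a|u|_U^p+b$ of Assumption \ref{asst2}[7], which is precisely what controls $\|u^*\|_{L^p_\lambda}$ (cf. Lemma \ref{OC sono limitati}). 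Feeding this $(u^*,y^*)$ back into the verification computation, the Fenchel inequality becomes equality at every $s$, so $J_\infty(0,x,u^*)=\Psi_\infty(x)=Z_\infty(0,x)$; thus $u^*$ is optimal, and uniqueness follows because strict convexity of $h_0$ (equivalently injectivity of $\partial_u h_0$, Assumption \ref{asst2}[5], making $(h_0^*)'$ single-valued) forces the equality case to hold only for this one control.

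The main obstacle I anticipate is rigorously justifying the chain rule for $s\mapsto e^{-\lambda s}\Psi_\infty(y(s))$ along a merely \emph{mild} solution, since $y(s)$ need not lie in $D(A)$ and $\langle\Psi_\infty'(y(s)),Ay(s)\rangle$ is a priori only a formal duality pairing in $\V$. The standard remedy is to regularize: either approximate by strong/Yosida-regularized trajectories $y_n$ with $A_n=nA(nI-A)^{-1}$, carry out the computation where it is legitimate, and pass to the limit using the $\B_2$ and gradient estimates from Theorem \ref{main}; or exploit the strong-solution approximants $\phi^\e$ of Definition \ref{strongdefi} and the fact that $\Psi(t,x)=e^{\lambda(T-t)}\phi_T(t,x)$ to transfer the finite-horizon verification (Theorem \ref{verif}) into the limit $T\to+\infty$ via Theorem \ref{main}. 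Controlling the transversality/boundary term $e^{-\lambda R}\Psi_\infty(y^*(R))\to0$ under the weakest admissible growth (distinguishing the two regimes \ref{asst2}[8.a] and [8.b]) is the delicate quantitative point, but it is exactly governed by the relation between $\lambda$, $\omega$, and $p$ already assumed.
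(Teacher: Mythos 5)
Your proposal is correct and follows essentially the same route as the paper: a verification theorem obtained by differentiating $e^{-\lambda s}\Psi_\infty(y(s))$ along trajectories (justified first on $D(A)$ and then by approximation), followed by synthesis via the Closed Loop Equation with the Lipschitz feedback map, admissibility of $u^*$ from the coercivity $h_0(u)\ge a|u|_U^p+b$ combined with the fundamental identity on finite horizons, and uniqueness from uniqueness of the CLE solution together with single-valuedness of $(h_0^*)'$. The technical obstacle you flag (chain rule along mild solutions and the transversality term under regimes [8.a]/[8.b]) is exactly the one the paper handles, in the same way.
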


\section{Proofs of the main results}\label{4}
\label{PROOFS} In this section we prove the theorems stated in
Section \ref{mainsec}.

\subsection{Auxiliary functions, equations and estimates}
We study infinite horizon by means of finite horizon.
Then it is worth noting that, thanks to the particular dependence
of data on the time variable, we can associate to the HJB equation  arising
in finite horizon the following equation:
\begin{equation}\label{hjb2}
\begin{cases}z_t(t,x)-\lambda z(t,x)+\langle Ax,z_x(t,x)\rangle
- h_0^*(-B^*z_x(t,x))+g_0(x)=0&
\\
z(T,x)=\phi_0(x)\end{cases}\end{equation}
and define a strong solution of (\ref{hjb2}) as follows.
\begin{defi} Let $(t,x)\in[0,T]\times {V^\prime}$.
We say that $Z_T$ is a \textrm{strong solution} to
 $(\ref{hjb2})$ if
$$Z_T(t,x)=e^{\lambda t}v_T(t,x)$$
with $v_T$ any strong solution to $(\ref{HJBb})$, in the sense of
Definition \ref{strongdefi}.\end{defi}

\begin{rem}
Equation (\ref{hjb2}) is obtained formally from
(\ref{HJBb}) with the change of variable $v(t,x)=e^{-\lambda t}
z(t,x)$. Note that one could give a direct definition of
solution of (\ref{hjb2}) (without passing through strong solutions
of (\ref{HJBb})) in the spirit of Definition $(\ref{strongdefi})$.
\hfill\qed\end{rem}

\noindent Note that
$$u\in L^p(t,T;U)\iff u\in L_\lambda^p(t,T;U)$$
so that all minimization procedure in Section 2 can be
equivalently operated  in $L^p(t,T;U)$ or in $L_\lambda^p(t,T;U)$.
Then, recalling that the unique strong solution to $(\ref{HJBb})$
is the value function of the optimal control problem (see
\cite{Fa3}), the following result is readily proven.
\begin{prop} Let Assumptions \ref{asst} be satisfied, and let
$Z_T$ be the unique strong solution to $(\ref{hjb2})$. Then
\begin{equation}\label{Z}Z_T(t,x)=\inf_{u\in
L^p_\lambda(t,T;U)}\left\{\int_t^{T}e^{-\lambda
(\tau-t)}[g_0(y(\tau))+h_0(u(\tau))]d\tau+e^{-\lambda(T-
t)}\phi_0(y(T))\right\}.\end{equation} \end{prop}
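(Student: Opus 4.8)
The plan is to establish \eqref{Z} by connecting the strong solution $Z_T$ to the already-known characterization of $v_T$ as a value function, using the change of variable that defines $Z_T$. The starting point is the definition $Z_T(t,x)=e^{\lambda t}v_T(t,x)$, where $v_T$ is the unique strong solution to \eqref{HJBb}. By Theorem~\ref{verif}, applied with the data $g(t,x)=e^{-\lambda t}g_0(x)$, $h(t,u)=e^{-\lambda t}h_0(u)$, and $\varphi(x)=e^{-\lambda T}\phi_0(x)$ (so that the conjugate $\H$ and the terminal cost match the structure of \eqref{hjb2}), we know that $v_T(t,x)=W_T(t,x)$ is the value function of the corresponding finite horizon problem. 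That is,
\begin{equation*}
v_T(t,x)=\inf_{u\in L^p(t,T;U)}\left\{\int_t^T e^{-\lambda\tau}\big[g_0(y(\tau))+h_0(u(\tau))\big]\,d\tau+e^{-\lambda T}\phi_0(y(T))\right\}.
\end{equation*}

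Next I would multiply through by $e^{\lambda t}$ to obtain $Z_T(t,x)$, distributing the factor inside the infimum (legitimate since $e^{\lambda t}>0$ is independent of $u$). This turns $e^{-\lambda\tau}$ into $e^{-\lambda(\tau-t)}$ inside the integral and turns $e^{-\lambda T}$ into $e^{-\lambda(T-t)}$ in the terminal term, yielding exactly the right-hand side of \eqref{Z}. The only remaining point is the admissible control set: \eqref{Z} is written with the infimum over $L^p_\lambda(t,T;U)$, whereas Theorem~\ref{verif} gives the value function as an infimum over $L^p(t,T;U)$. Here I would invoke the observation, stated just before the proposition, that on a bounded interval $u\in L^p(t,T;U)\iff u\in L^p_\lambda(t,T;U)$, since the weight $e^{-\lambda\tau}$ is bounded above and below by positive constants on $[t,T]$. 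Hence the two infima range over the same set of controls and coincide.

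I expect no genuine obstacle here, as the result is essentially a bookkeeping translation of Theorem~\ref{verif} through the exponential change of variable; the proposition is \emph{readily proven} precisely because all the analytic content (existence, uniqueness, and the verification theorem identifying the strong solution with the value function) is already supplied by the finite horizon theory of Section~\ref{preliminari}. The one place demanding a line of care is confirming that the data transformation sending \eqref{HJBb} to \eqref{hjb2} is consistent with Assumptions~\ref{asst}, so that Theorem~\ref{verif} indeed applies; this follows because Assumptions~\ref{asst2}$[1\text{--}7]$ imply Assumptions~\ref{asst}$[1\text{--}7]$ (Remark~\ref{estensioni}), and the time-dependent data built from $g_0,h_0,\phi_0$ inherit the required regularity and convexity from $\Sigma_0(\V)$. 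With that verified, \eqref{Z} follows immediately.
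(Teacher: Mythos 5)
Your proposal is correct and follows essentially the same route as the paper: the paper's (implicit) proof likewise combines the identification $L^p(t,T;U)=L^p_\lambda(t,T;U)$ on a bounded interval with Theorem~\ref{verif} (the strong solution of \eqref{HJBb} is the value function $W_T$) and the exponential change of variable $Z_T(t,x)=e^{\lambda t}v_T(t,x)$. Your added check that the time-dependent data $g(t,x)=e^{-\lambda t}g_0(x)$, $h(t,u)=e^{-\lambda t}h_0(u)$, $\varphi=e^{-\lambda T}\phi_0$ satisfy Assumptions~\ref{asst} is exactly the bookkeeping the paper leaves to the reader when it says the result is ``readily proven.''
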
 We may also
write a forward version of $(\ref{hjb2})$, that is
\begin{equation}\label{hjb3}\begin{cases}
\psi_t(t,x)+\lambda \psi(t,x)-\langle Ax,\psi_x(t,x)\rangle
+ h_0^*(-B^*\psi_x(t,x))=g_0(x)&\\
\psi(0,x)=\phi_0(x)\end{cases}\end{equation} with
$(t,x)\in[0,T]\times {V^\prime}$ and $\psi(t,x)=z(T-t,x)$, where $Z$ is the
unique strong solution of $(\ref{hjb2})$, and then prove the
following important result.
\begin{lemma}Let $\Psi_T(t,x)=Z_T(T-t,x)$, where $Z$ is given by $(\ref{Z})$. Then
$\Psi$ does not depend on $T$, that is
\begin{equation}\label{Psi}\Psi(t,x)\equiv\Psi_T(t,x)=\inf_{u\in L^p_\lambda(0,t;U)}\left\{\int_0^{t}e^{-\lambda
\tau}[g_0(y(\tau))+h_0(u(\tau))]d\tau+e^{-\lambda
t}\phi_0(y(t))\right\}.\end{equation} Hence, . Moreover a Dynamic Programming Principle holds
\begin{equation}\label{PPDT}\Psi(t,x)=\kern-15pt\inf_{u\in L^p_\lambda(0,s;U)}\kern-10pt\left\{\int_0^{s}e^{-\lambda
\tau}[g_0(y(\tau))+h_0(u(\tau))]d\tau+e^{-\lambda
s}\Psi(t-s,y(s))\right\},\ \forall s\in[0, t].\end{equation}
\end{lemma}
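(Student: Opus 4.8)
The plan is to derive both claims directly from the representation (\ref{Z}) of the strong solution $Z_T$, exploiting the fact that the whole problem is \emph{autonomous} — the semigroup $\{e^{\t A}\}$, the operator $B$, and the running data $g_0,h_0,\phi_0$ carry no explicit time dependence — and that the discount in (\ref{Z}) is measured relative to the initial time. This is the only structural feature that matters; everything else is a change of variables and the standard splitting of an infimum.

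First I would prove the $T$-independence together with the formula (\ref{Psi}) by a time translation. Writing (\ref{Z}) with first argument $T-t$ gives
\[
Z_T(T-t,x)=\inf_{u\in L^p_\lambda(T-t,T;U)}\left\{\int_{T-t}^{T}e^{-\lambda(\tau-(T-t))}[g_0(y(\tau))+h_0(u(\tau))]d\tau+e^{-\lambda t}\phi_0(y(T))\right\},
\]
where $y$ is the mild trajectory issued from $x$ at time $T-t$. Setting $\sigma=\tau-(T-t)$ and introducing the shifted control $\tilde u(\sigma)=u(\sigma+(T-t))$ and shifted trajectory $\tilde y(\sigma)=y(\sigma+(T-t))$, autonomy yields that $\tilde y$ is exactly the mild solution on $[0,t]$ with $\tilde y(0)=x$ and control $\tilde u$. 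On a bounded interval the weighted space $L^p_\lambda$ coincides, with equivalent norm, with $L^p$, so $u\mapsto\tilde u$ is a bijection of admissible controls; moreover the discount becomes $e^{-\lambda\sigma}$ and the terminal weight is $e^{-\lambda t}$. The transformed functional is precisely the right-hand side of (\ref{Psi}) and contains no occurrence of $T$, which proves both the formula and the independence of $\Psi$ from the horizon.

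Next I would establish the Dynamic Programming Principle (\ref{PPDT}) by splitting the horizon $[0,t]$ at the intermediate time $s$. Denoting by $\mathcal{J}(t,x,u)$ the functional minimized in (\ref{Psi}), I would use the flow (semigroup) property of the mild trajectory, $y(\tau)=e^{(\tau-s)A}y(s)+\int_s^\tau e^{(\tau-r)A}Bu(r)\,dr$ for $\tau\ge s$, which follows from the semigroup law and the splitting of the convolution integral. Factoring $e^{-\lambda s}$ out of the tail and shifting by $s$ produces the decomposition
\[
\mathcal{J}(t,x,u)=\int_0^s e^{-\lambda\tau}[g_0(y(\tau))+h_0(u(\tau))]d\tau+e^{-\lambda s}\,\mathcal{J}(t-s,y(s),\hat u),
\]
with $\hat u(\cdot)=u(\cdot+s)$. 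The ``$\ge$'' inequality in (\ref{PPDT}) then follows by bounding $\mathcal{J}(t-s,y(s),\hat u)\ge\Psi(t-s,y(s))$ and taking the infimum over $u$ (noting that $u|_{[0,s]}$ ranges over all of $L^p_\lambda(0,s;U)$). For the reverse inequality I would, given $\e>0$, select a near-optimal control on $[0,s]$ and an $\e$-optimal control for $\Psi(t-s,y(s))$ on $[0,t-s]$, concatenate them into an admissible control on $[0,t]$, invoke the decomposition, and let $\e\to0$.

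The argument is essentially routine once autonomy is invoked. The two points requiring care are the verification that the time shift is a bijection between the relevant admissible control classes — immediate on bounded intervals, where the exponential weight is bounded above and below — and, in the reverse DPP inequality, the admissibility of the concatenated control together with the correct restart of the trajectory from $y(s)$. I expect this concatenation-and-restart step, handled by the flow property above, to be the only genuine (and minor) obstacle.
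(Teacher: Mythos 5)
Your proposal is correct and takes essentially the same route as the paper: the $T$-independence and formula (\ref{Psi}) are obtained by the identical time-translation/change-of-variables argument, exploiting autonomy of the data and the fact that the shift $u\mapsto u(\cdot+T-t)$ is a bijection between the admissible control classes (this is the paper's identity $J_{t}(s,x,u)=e^{-\lambda s}J_{t-s}(0,x,u(\cdot+s))$ applied at $s=T-t$). The paper omits the proof of the Dynamic Programming Principle as standard, and your splitting-and-concatenation argument is exactly that standard proof, so it adds nothing conflicting, only the omitted details.
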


\begin{proof} By changing the variable, for all $0<s<t$, and $u\in L^p_\lambda(s,t;U)$, we have
\begin{equation}\label{funzio}
J_{t}(s,x,u(\cdot))=e^{-\lambda
  s}J_{t-s}(0,x,u(\cdot+s)).\end{equation}
Then by definition of value function and
$(\ref{Z})$, we have
$$\Psi_T(t,x)=\inf_{u\in L^p_\lambda(T-t,T;U)}
J_{T}(T-t, x, u)=\inf_{\bar u\in L^p_\lambda(0,t;U)}J_t(0,x,\bar
u)$$
%
%
%
where the last equality is obtained by setting
$\bar u(s):=u(s+T-t)$, and observing that
$$u\in L^p_\lambda(T-t,T;U)\iff \bar u\in L^p_\lambda(0,t;U).$$
Then (\ref{Psi}) follows by generality of $\bar u$.

The proof of the Dynamic Programming Principle is standard and we omit it.
\end{proof}

Here follow  some other technical results that will be frequently exploited in the sequel.

\begin{lemma}\label{stima y} In Assumptions \ref{asst2}, if $u\in L^p_\lambda(t,T;U)$ is an admissible control and
$y(\tau)\equiv y(\tau;t,x,u)$ is the associated trajectory, and
$q=p/(p-1)$, then for suitable positive  constant $C$ independent
of $t$ and $x$ the following estimates hold:

\begin{equation}\label{y-stima-1,7}\int_t^se^{-\omega\tau}\vert
u(\tau)\vert_{U}d\tau\le {\ro3(t,s)}^{\frac{1}{q}}\Vert
u\Vert_{L_\lambda^p(t,s;U)}\end{equation}
\begin{equation}\label{y-stima-1}\vert y(\tau)\vert_{\V}\le
Ce^{\omega\tau}\big[\vert
x\vertv+\ro3(t,\tau)^{\frac{1}{q}}\Vert
u\Vert_{L_\lambda^p(t,\tau;U)}\big]
\end{equation}
\begin{equation}\label{muovastima}\int_t^se^{-\lambda\tau}\vert y(\tau)\vertv d\tau\le
C \left[\vert x\vertv+\Vert u\Vert_{L_\lambda^p(t,s;U)}
+1\right],
\ \forall s\ge t\end{equation}
with
$$\ro3(t,s)=\begin{cases} \frac{p-1}{\vert\lambda-p\omega\vert}\vert
e^{q\left(\frac{\lambda}{p}-\omega\right)t}-
e^{q\left(\frac{\lambda}{p}-\omega\right)s}\vert &\lambda\not=\omega p\\
\vert t-s\vert &\lambda=\omega p.\end{cases}$$
\end{lemma}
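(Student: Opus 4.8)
The plan is to establish the three estimates in sequence, since the bound on $\int_t^s e^{-\omega\tau}|u(\tau)|_U\,d\tau$ feeds directly into the trajectory estimate, which in turn is integrated against the discount factor to obtain the last inequality. The only real engine here is H\"older's inequality together with the explicit computation of a weighted integral; everything else is bookkeeping with the semigroup bound from Assumption~\ref{asst2}[3].

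For the first estimate I would write
$$\int_t^s e^{-\omega\tau}|u(\tau)|_U\,d\tau
=\int_t^s e^{\left(\frac{\lambda}{p}-\omega\right)\tau}\,
\Big(e^{-\frac{\lambda}{p}\tau}|u(\tau)|_U\Big)\,d\tau,$$
and apply H\"older with exponents $q=p/(p-1)$ and $p$. The second factor has $L^p$-norm exactly $\|u\|_{L^p_\lambda(t,s;U)}$, while the first factor contributes $\big(\int_t^s e^{q(\frac{\lambda}{p}-\omega)\tau}\,d\tau\big)^{1/q}$, and a direct integration of this elementary exponential yields precisely $\theta(t,s)^{1/q}$, splitting into the two cases according to whether $\lambda=\omega p$ or not (the second case being where the exponent vanishes and one integrates a constant). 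This gives $(\ref{y-stima-1,7})$ with no room for slack.

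For $(\ref{y-stima-1})$ I would take norms in the mild formulation $y(\tau)=e^{(\tau-t)A}x+\int_t^\tau e^{(\tau-\sigma)A}Bu(\sigma)\,d\sigma$, estimate the semigroup by $e^{\omega(\tau-\sigma)}$ via Assumption~\ref{asst2}[3], use boundedness of $B$ from Assumption~\ref{asst2}[2] to pull out a constant $\|B\|_{L(U,\V)}$, and then factor $e^{\omega\tau}$ out of the integral so that what remains is $\int_t^\tau e^{-\omega\sigma}|u(\sigma)|_U\,d\sigma$, to which $(\ref{y-stima-1,7})$ applies with $s=\tau$. Absorbing $\|B\|$ and the semigroup constant into $C$ delivers $(\ref{y-stima-1})$; I must check that $C$ can indeed be chosen independent of $t$ and $x$, which is immediate since both structural constants are.

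The third estimate $(\ref{muovastima})$ is where the care lies, and I expect it to be the main obstacle. Inserting $(\ref{y-stima-1})$ into $\int_t^s e^{-\lambda\tau}|y(\tau)|_\V\,d\tau$ produces two terms: one proportional to $|x|_\V\int_t^s e^{(\omega-\lambda)\tau}\,d\tau$, and one involving $\int_t^s e^{(\omega-\lambda)\tau}\theta(t,\tau)^{1/q}\|u\|_{L^p_\lambda(t,\tau;U)}\,d\tau$. The point is to show both are bounded by $C[|x|_\V+\|u\|_{L^p_\lambda(t,s;U)}+1]$ \emph{uniformly}, and this is exactly where Assumptions~\ref{asst2}[8a] or [8b] on the size of $\lambda$ relative to $\omega$ must be used to guarantee that the exponential integrals converge and that $\theta(t,\tau)^{1/q}$ times the decaying factor stays integrable as $s$ grows. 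I would bound $\|u\|_{L^p_\lambda(t,\tau;U)}\le\|u\|_{L^p_\lambda(t,s;U)}$ to remove the $\tau$-dependence, then reduce to showing $\int_t^s e^{(\omega-\lambda)\tau}\theta(t,\tau)^{1/q}\,d\tau$ is bounded by a constant; substituting the explicit form of $\theta$ and using $\lambda>\omega$ (and, in case [8a], $\lambda>2\omega$ to control the $p>2$ regime) makes the integrand decay so that the integral is finite and $s$-uniform. The appearance of the additive $+1$ signals that one tolerates an absolute constant from these convergent integrals, so the estimate need not be homogeneous.
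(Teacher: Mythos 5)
Your proposal is correct and follows essentially the same route as the paper's proof: H\"older's inequality (with exponents $p$ and $q$ after splitting off the weight $e^{-\lambda\tau/p}$) for (\ref{y-stima-1,7}), the mild formulation together with the semigroup bound and $B\in L(U,\V)$ for (\ref{y-stima-1}), and then insertion of that bound, pulling $\Vert u\Vert_{L^p_\lambda(t,s;U)}$ out of the integral, and a uniform-in-$(t,s)$ estimate of $\int_t^s e^{-(\lambda-\omega)\tau}\theta(t,\tau)^{1/q}\,d\tau$ for (\ref{muovastima}). The only step the paper makes more explicit than you do is the critical case $\lambda=\omega p$, where $\theta(t,\tau)^{1/q}=\vert\tau-t\vert^{1/q}$ grows polynomially and one sacrifices a small $\delta>0$ of the exponential decay $e^{-(\lambda-\omega)\tau}$ to absorb it; the relevant conditions there are $\lambda>\omega$ and $\lambda>0$ rather than $\lambda>2\omega$, but this does not affect the validity of your argument.
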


\begin{rem}\label{omeganeg} Note that all inequalities in the following proof hold also for $p=2$, but the main results in section \ref{mainsec} require also $p>2$.
Moreover, in the set of Assumptions \ref{asst2}
$$\omega<0\implies\lambda>\omega p.$$
Indeed, from [8.b] follows $p>1$ which implies $\{\lambda \ :\ \omega<\lambda<p\omega\}=\emptyset$, while
from [8.a] follows $p\ge2$ which implies $\{\lambda \ :\ 2\omega<\lambda<p\omega\}=\emptyset$. \end{rem}

\begin{proof} In what follows we denote by $C$ a positive constant not depending on $t$, $x$ and $u$.
 Inequality (\ref{y-stima-1,7}) holds by means
H\"older's inequality. From (\ref{y-stima-1,7}) follows
\begin{equation}\label{nuovastima}\begin{split}\vert y(\tau)\vert_{\V}
&\le \max\{\Vert
B\Vert_{L(U,\V)},e^{\vert\omega\vert t}\}
e^{\omega\tau} \bigg(\vert
x\vert_{\V}+\int_t^{\tau} e^{-\omega\sigma}\vert
u(\sigma)\vert_Ud\sigma\bigg)\\
&\le Ce^{\omega\tau}\bigg[\vert
x\vertv+\ro3(t,\tau)^{\frac{1}{q}}\Vert
u\Vert_{L_\lambda^p(t,\tau;U)}\bigg],\end{split}\end{equation} so
that also (\ref{y-stima-1}) is proven. To prove
(\ref{muovastima}) we need to estimate the right hand side in
\begin{equation}\label{boh}\begin{split}
\int_t^s e^{-\lambda\tau}\vert y(\tau)\vertv&\le \frac{C}{\lambda}(e^{-\lambda t}-e^{-\lambda s}) \vert
x\vertv+C
\Vert u\Vert_{L_\lambda^p(t,s;U)}
\int_t^s e^{-(\lambda-\omega)\tau}\ro3(t,\tau)^{\frac{1}{q}}d\tau
\end{split}\end{equation}
Indeed, in case
$\lambda>\omega p$, one derives
\begin{equation}\label{rho3}\begin{split}e^{-(\lambda-\omega)\tau}\ro3(t,\tau)^{\frac{1}{q}}&\le
Ce^{-(\lambda-\omega)\tau}\left[e^{q\left(\frac{\lambda}{p}-\omega\right)\tau}-
e^{q\left(\frac{\lambda}{p}-\omega\right)t}\right]^{\frac{1}{q}}\\
&\le C e^{-\frac{\lambda}{q}\tau},\\
\end{split}\end{equation}
while similarly in case
$\lambda<\omega p$, one obtains
$$e^{-(\lambda-\omega)\tau}\ro3(t,\tau)^{\frac{1}{q}}\le C e^{-(\lambda-\omega)\tau}e^{\left(\frac{\lambda}{p}-\omega\right)t}\le C e^{-(\lambda-\omega)\tau}$$
(we recall that in such case $\lambda-\omega>0$ in view of Remark \ref{omeganeg}).
Hence, when $\lambda\not=\omega p$, we have
\begin{equation}\label{omegapnot=lambda}\begin{split}\int_t^s e^{-(\lambda-\omega)\tau}\ro3(t,\tau)^{\frac{1}{q}}d\tau
&\le C \Vert u\Vert_{L_\lambda^p(t,s;U)}\
e^{-\left[\frac{\lambda}{q}\wedge(\lambda-\omega)\right]t},\end{split}\end{equation}
for all $s$. In the case
$\lambda=\omega p$, on the other hand, there exists $\delta>0$ such
that $\lambda-\omega-\delta>0$, and consequently
 $T_\delta\ge t$
such that
$$e^{-\delta\tau}\vert \tau-t\vert^{\frac{1}{q}}\le1,\ \forall\tau\ge T_\delta.$$
Then
 \begin{equation}\label{rho31}\begin{split}\int_t^s&e^{-(\lambda-\omega)\tau}\ro3(t,\tau)^{\frac{1}{q}}d\tau\le
\int_t^{+\infty}e^{-(\lambda-\omega)\tau}\ro3(t,\tau)^{\frac{1}{q}}d\tau\\
&\le\vert
T_\delta-t\vert^{\frac{1}{q}}\int_t^{{T_\delta}}e^{-(\lambda-\omega)\tau}
d\tau+\int_{T_\delta}^{+\infty}e^{-(\lambda-\omega-\delta)\tau}d\tau
 \\
 &\le\frac{1}{\lambda-\omega}\vert
 T_\delta-t\vert^{\frac{1}{q}}(e^{-(\lambda-\omega)
t}-e^{-(\lambda-\omega)
T_\delta})+\frac{1}{\lambda-\omega-\delta}e^{-(\lambda-\omega-\delta)T_\delta}
 \\
 &\le C
\end{split}\end{equation}
for a suitable constant $C$. Then applying (\ref{rho31}) and
(\ref{omegapnot=lambda}) to  (\ref{boh}) one derives (\ref{muovastima}).
\end{proof}

\begin{lemma}\label{OC sono limitati} Let Assumptions \ref{asst2} be satisfied.
Let $\e\in[0,1]$ be fixed, $u_\e\in L^p_\lambda(t,s;U)$ be any
$\e$-optimal control at $(t,x)$, with horizon $s$ for the
functional $J_s(t,x,\cdot)$ defined in $(\ref{J-t-T})$. Then, for
a suitable positive constant $K$, independent of $t$, $s$ and $x$,
we have:

$(i)$ $\Vert u_\e\Vert_{L_\lambda^p(t,s;U)} \le K(1+\vert
x\vertv^2)$, when Assumptions \ref{asst2}[8.a] holds;

$(ii)$ $\Vert u_\e\Vert_{L_\lambda^p(t,s;U)} \le K(1+\vert
x\vertv)$, when Assumptions \ref{asst2}[8.b] holds.
\end{lemma}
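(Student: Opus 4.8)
The plan is to turn the $\e$-optimality of $u_\e$ into an a priori bound by playing the coercivity of $h_0$ (Assumption \ref{asst2}[7]) against the at most quadratic growth of the remaining data. Since the null control $0\in L^p_\lambda(t,s;U)$ is admissible, $\e$-optimality yields
\[ J_s(t,x,u_\e)\le \inf_{v}J_s(t,x,v)+\e\le J_s(t,x,0)+\e. \]
Set $N:=\Vert u_\e\Vert_{L^p_\lambda(t,s;U)}$ and let $y_\e$ be the trajectory generated by $u_\e$. On the left-hand side, Assumption \ref{asst2}[7] gives the lower bound $\int_t^s e^{-\lambda\tau}h_0(u_\e(\tau))\,d\tau\ge aN^p+b\int_t^s e^{-\lambda\tau}\,d\tau$, so that after isolating this superlinear term the whole proof reduces to showing that every remaining contribution grows at most \emph{linearly} in $N$ (and polynomially in $\vert x\vertv$); the $p$-power will then absorb the linear term.

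First I would estimate the comparison cost $J_s(t,x,0)$. With $u\equiv 0$ the trajectory is $y_0(\tau)=e^{(\tau-t)A}x$, so Assumption \ref{asst2}[3] gives $\vert y_0(\tau)\vertv\le e^{\omega(\tau-t)}\vert x\vertv$. Since $h_0(0)<+\infty$ and the data $g_0,\phi_0$ lie in $\B_2(\V)$ under [8.a] and in $\B_1(\V)$ under [8.b], one bounds
\[ J_s(t,x,0)\le C\Big(1+\int_t^s e^{-\lambda\tau}\vert y_0(\tau)\vertv^{\gamma}\,d\tau\Big)\le C\big(1+\vert x\vertv^{\gamma}\big), \]
where $\gamma=2$ in case [8.a] and $\gamma=1$ in case [8.b]. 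The weighted integral is finite and bounded uniformly in $t$ and $s$ precisely because $\lambda>2\omega$ in the first case and $\lambda>\omega$ in the second (cf. Remark \ref{omeganeg}); this is exactly what produces the $\vert x\vertv^2$ versus $\vert x\vertv$ dichotomy in the statement and clarifies the role of the two alternative sets of hypotheses [8.a]/[8.b].

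Next I would bound from below the term $\int_t^s e^{-\lambda\tau}g_0(y_\e(\tau))\,d\tau$ remaining on the left-hand side. The decisive observation is that convexity of $g_0$ forces only a \emph{linear} minorization: by the tangent inequality at the origin,
\[ g_0(y)\ge g_0(0)+\langle g_0^\prime(0),y\rangle\ge g_0(0)-\vert g_0^\prime(0)\vert_V\,\vert y\vertv . \]
Inserting this and invoking estimate (\ref{muovastima}) of Lemma \ref{stima y}, which controls $\int_t^s e^{-\lambda\tau}\vert y_\e(\tau)\vertv\,d\tau$ by $C(1+\vert x\vertv+N)$, one obtains $\int_t^s e^{-\lambda\tau}g_0(y_\e(\tau))\,d\tau\ge -C(1+\vert x\vertv+N)$. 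Collecting the three estimates, the $\e$-optimality inequality collapses to $aN^p\le C\big(1+\vert x\vertv^{\gamma}+N\big)$.

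Finally, since $p>1$, Young's inequality $CN\le \tfrac a2 N^p+C'$ lets me absorb the linear term into the left side, giving $\tfrac a2 N^p\le C(1+\vert x\vertv^{\gamma})$, hence $N\le K(1+\vert x\vertv^{\gamma})^{1/p}\le K(1+\vert x\vertv^{\gamma})$; this is $(i)$ for $\gamma=2$ and $(ii)$ for $\gamma=1$. I expect the main obstacle to be the coupling in the third step: the term $\int e^{-\lambda\tau}g_0(y_\e)$ involves the unknown control through $y_\e$, and the scheme closes only because convexity caps its growth at first order in $N$, which the coercive $p$-th power coming from $h_0$ dominates. The accompanying delicate point, to be verified uniformly in $t$ and $s$, is the finiteness and uniform boundedness of all the exponentially weighted integrals, which is governed by the relations between $\lambda$, $\omega$ and $p$ recorded in Assumption \ref{asst2}[8] and Remark \ref{omeganeg}.
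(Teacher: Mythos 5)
Your overall scheme---compare the $\e$-optimal control against a fixed admissible one, isolate the coercive term $aN^p$ coming from Assumption \ref{asst2}[7], minorize the remaining costs linearly in $N$ via convexity and estimate (\ref{muovastima}), then absorb the linear term into the $p$-th power---is exactly the strategy of the paper's proof. However, as written your argument has two genuine gaps. The first is the choice of comparison control: you take $u\equiv 0$ and assert $h_0(0)<+\infty$, but this does not follow from Assumptions \ref{asst2}. The function $h_0$ is allowed to be infinite-valued, and assumption [6] ($h_0^*(0)=0$) only says $\inf_U h_0=0$, not $0\in dom(h_0)$. Indeed, in the paper's motivating economic example with constrained controls, $h_0$ equals $+\infty$ outside a compact set, which need not contain the origin; in that case $J_s(t,x,0)=+\infty$ and your comparison inequality is vacuous. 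The paper fixes this by choosing an arbitrary $\bar u\in dom(h_0)$ (nonempty since $h_0$ is proper) and comparing with the constant control $\bar u(\tau)\equiv\bar u$; the trajectory then satisfies $\vert \bar y(\tau)\vertv\le K_1 e^{\omega\tau}(1+\vert x\vertv)$ and the same computation yields $J_s(t,x,\bar u)\le C(1+\vert x\vertv^{\gamma})$ with $\gamma=2$ under [8.a] and $\gamma=1$ under [8.b].

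The second gap concerns the terminal cost. The functional whose $\e$-minimizers this lemma is applied to (see (\ref{Psi}) and Lemma \ref{stima psi}) carries the terminal term $e^{-\lambda s}\phi_0(y(s))$; your upper bound implicitly acknowledges this (you invoke the growth of $\phi_0$ when estimating the comparison cost), but in the lower bound for $J_s(t,x,u_\e)$ you minorize only $\int_t^s e^{-\lambda\tau}g_0(y_\e(\tau))\,d\tau$ and the $h_0$-term, silently dropping $e^{-\lambda s}\phi_0(y_\e(s))$. Since $\phi_0$ is not bounded from below, this term cannot be discarded: an $\e$-optimal control could in principle pay a large control norm to steer $y_\e(s)$ to a state where $\phi_0$ is very negative, which is precisely the scenario the lemma must exclude. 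The fix uses the same tools you already invoke: by convexity $\phi_0(y)\ge -C(1+\vert y\vertv)$, and then (\ref{y-stima-1}) together with the uniform boundedness (in $s$) of $e^{-(\lambda-\omega)s}\theta(t,s)^{1/q}$ gives $e^{-\lambda s}\phi_0(y_\e(s))\ge -C(1+\vert x\vertv+N)$, one more linear-in-$N$ term absorbed as before. This is exactly the term $-C_0e^{-\lambda s}(1+\vert y_\e(s)\vertv)$ handled explicitly in the paper's proof. With these two corrections, your proof coincides with the paper's.
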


\begin{proof} Let $\bar u\in dom(h_0)$, and $\bar u(\tau)\equiv \bar u$.
Let also $(u_\e, y_\e)$ be $\e$-optimal at $(t,x)$. Then
\begin{equation}\label{fond}
J_s(t,x,u_\e)-\e\le J_s(t,x,\bar u).\end{equation}
On one hand, from the convexity of $g_0$ and $\phi_0$, and from (\ref{muovastima}),
there exists some positive constant $C_0$ such that
\begin{equation}\begin{split}
J_s(t,x,u_\e)&\ge \int_t^se^{-\lambda\tau}(a\vert u_\e(\tau)\vert^p_U+b) d\tau-
C_0\int_t^se^{-\lambda \tau}(1+\vert y_\e(\tau)\vertv) d\tau+\\
&\ \ \ \ \ \ \ \ \ \ \ \ \ \ \ \ \ \ \ \ \ \ \ -
C_0e^{-\lambda s}
(1+\vert y_\e(s)\vertv )\\
&\ge a\Vert
u_\e\Vert^p_{L_\lambda^p(t,s;U)}
 +\frac{b-C_0}{\lambda}
-C(\vert x\vertv+\Vert u_\e\Vert_{L_\lambda^p(t,s;U)}+1)+\\
&\ \ \ \ \ \ -C_0e^{-\lambda s}- \ CC_0
e^{-(\lambda-\omega)s}(\vert x \vert+\Vert
u_\e\Vert_{L_\lambda^p(t,s;U)} {\ro3(t,s)}^{\frac{1}{q}})\\
\end{split}\end{equation}
where in the last estimate we applied the assumptions on $h_0$,
and estimates (\ref{y-stima-1}) (\ref{muovastima}). Since
$e^{-(\lambda-\omega)s}{\ro3(t,s)}^{\frac{1}{q}}$ is bounded for all $s$,
the latter implies
\begin{equation}\label{fond2}
J_s(t,x,u_\e)\ge a\Vert u_\e\Vert^p_{L_\lambda^p(t,s;U)}-\gamma_1
\Vert u_\e\Vert_{L_\lambda^p(t,s;U)}-\gamma_2\vert
x\vertv+\gamma_3\end{equation} for a suitable choice of the
constants $\gamma_1, \gamma_2, \gamma_3$.
 On the other hand, also
$J_s(t,x,\bar u)$ can be estimated by means of either
 Assumptions \ref{asst2}[8.a] or [8.b]. Indeed,
 we derive that the trajectory $\bar
y(\tau)=y(t,x,\bar u)$
 satisfies
 $$\vert \bar y(\tau) \vert_{\V}\le
 K_1 e^{\omega\tau}(1+\vert x\vert_{\V}),$$
where $K_1=e^{-\omega t}(1\vee\Vert B\Vert \vert\bar u\vert
\omega^{-1})$. Then, if [8.a] holds, $\vert \bar y(\tau)
\vert_{\V}\le 2K_1^2 e^{2\omega\tau}(1+\vert x\vert^2_{\V})$, so
that
\begin{equation}\label{fond3}\begin{split}
J_s(t,x,\bar u)&\le (\vert h_0(\bar u)\vert+\vert
g_0\vert_{\B_2})\lambda^{-1}+2\vert g_0\vert_{\B_2} K_1^2(1+\vert
x\vert^2_{\V})
(\lambda-2\omega)^{-1}+\\&+\vert\phi_0\vert_{\B_2}e^{-\lambda
s}(1+ 2K_1^2 e^{2\omega\tau}(1+\vert x\vert^2_{\V}))\\
&\le \gamma_4 (1+\vert x\vert^2_{\V})
\end{split}\end{equation}
for a suitable constant $\gamma_4$. Hence, by means of
(\ref{fond}), (\ref{fond2}) and (\ref{fond3}), we obtain
$$\Vert u_\e\Vert^p_{L_\lambda^p(t,s;U)}(a\Vert u_\e\Vert^p_{L_\lambda^p(t,s;U)}-\gamma_1)
\le (2\gamma_2+\gamma_4) (1+\vert x\vert^2)-\gamma_3+\e$$ which
imply $(i)$. If instead [8.b] holds, then by a similar reasoning
one derives
\begin{equation}\label{fond4}
\begin{split}J_s(t,x,\bar u)&\
\le \gamma_5 (1+\vert x\vert_{\V})
\end{split}\end{equation}
for a suitable constant $\gamma_5$, and then $(ii)$.
\end{proof}

\begin{lemma} \label{stima psi} Let Assumptions \ref{asst2} be satisfied. If Assumptions \ref{asst2} hold with [8.a]), then $\Psi$ satisfies
$$\exists C>0\ \ :\ \ \vert\Psi(t,x)\vert\le C(1+\vert x\vert^2_{\V}),\ \forall (t,x)\in[0,+\infty[\times\V.$$
 If  Assumptions \ref{asst2} hold with [8.b]), then $\Psi$ satisfies
$$\exists C>0\ \ :\ \ \vert\Psi(t,x)\vert\le C(1+\vert x\vert_{\V}),\ \forall (t,x)\in[0,+\infty[\times\V.$$
\end{lemma}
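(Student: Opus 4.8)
The plan is to read off both bounds from the material already assembled in the proof of Lemma \ref{OC sono limitati}, using the representation (\ref{Psi}), which identifies
$$\Psi(t,x)=\inf_{u\in L^p_\lambda(0,t;U)}J_t(0,x,u),$$
the finite horizon value with initial time $0$ and horizon $t$. Since every constant produced in Lemma \ref{OC sono limitati} is independent of the horizon and of the initial point, the resulting bounds on $\Psi$ will be uniform in $t$, which is exactly what the statement demands. So there is no genuinely new estimate to prove: the work is to separate a lower and an upper bound and to track the polynomial degrees.

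For the lower bound I would first note that inequality (\ref{fond2}) was in fact derived for an \emph{arbitrary} admissible control — its proof uses only the coercivity $h_0(u)\ge a\vert u\vert_U^p+b$, the convexity (hence affine minorization) of $g_0$ and $\phi_0$, and the trajectory estimates (\ref{y-stima-1})--(\ref{muovastima}), never the $\e$-optimality of $u_\e$. Applying it with initial time $0$ and horizon $t$ gives, for every $u\in L^p_\lambda(0,t;U)$,
$$J_t(0,x,u)\ge a\Vert u\Vert^p_{L_\lambda^p(0,t;U)}-\gamma_1\Vert u\Vert_{L_\lambda^p(0,t;U)}-\gamma_2\vert x\vertv+\gamma_3.$$
The scalar map $s\mapsto as^p-\gamma_1 s$ on $[0,+\infty[$ is bounded below, say by $-m$ with $m\ge 0$ depending only on $a,\gamma_1,p$ (here $p>1$ is all that is needed). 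Taking the infimum over $u$ then yields
$$\Psi(t,x)\ge -m-\gamma_2\vert x\vertv+\gamma_3\ge -C(1+\vert x\vertv),$$
a lower bound that is \emph{linear} in $\vert x\vertv$ in both cases [8.a] and [8.b].

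For the upper bound I would test the infimum in (\ref{Psi}) against the single constant control $\bar u\in dom(h_0)$, so that $\Psi(t,x)\le J_t(0,x,\bar u)$, and then invoke the estimates (\ref{fond3}) and (\ref{fond4}) already computed for $J_s(t,x,\bar u)$. With initial time $0$ and horizon $t$ these give $J_t(0,x,\bar u)\le\gamma_4(1+\vert x\vertv^2)$ under [8.a], and $J_t(0,x,\bar u)\le\gamma_5(1+\vert x\vertv)$ under [8.b]. Combining the two sides concludes the argument: under [8.a] the lower bound is linear and the upper bound quadratic, whence $\vert\Psi(t,x)\vert\le C(1+\vert x\vertv^2)$; under [8.b] both bounds are linear, whence $\vert\Psi(t,x)\vert\le C(1+\vert x\vertv)$.

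The proof is therefore essentially bookkeeping, and the only point requiring care — the closest thing to an obstacle — is keeping every constant independent of $t$. This holds because (\ref{fond2})--(\ref{fond4}) carry constants independent of the horizon, and because the coercivity supplied by $p>1$ bounds $as^p-\gamma_1 s$ below by a constant depending on neither $t$ nor $x$; the mild asymmetry that the lower bound is only linear while the [8.a] upper bound is quadratic is harmless, since in that case the quadratic upper bound dominates.
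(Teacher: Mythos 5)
Your proof is correct, and while it shares the upper-bound argument with the paper (testing the infimum in (\ref{Psi}) against a constant control $\bar u\in dom(h_0)$ and invoking (\ref{fond3}) or (\ref{fond4})), your lower bound takes a genuinely different route. The paper bounds $\Psi(t,x)$ from below by picking an $\e$-optimal control $u_\e$, minorizing $g_0$, $h_0$, $\phi_0$ by affine functions, applying the trajectory estimate (\ref{muovastima}), and then crucially invoking the \emph{conclusion} of Lemma \ref{OC sono limitati} to control $\Vert u_\e\Vert_{L^p_\lambda(0,t;U)}$; this yields a lower bound that is quadratic in $\vert x\vertv$ under [8.a] and linear under [8.b]. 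You instead observe — correctly — that the intermediate inequality (\ref{fond2}) in the proof of Lemma \ref{OC sono limitati} never uses $\e$-optimality, so it holds for \emph{every} admissible control with constants independent of $t$, $x$ and $u$; taking the infimum over $u$ and using that $s\mapsto as^p-\gamma_1 s$ is bounded below on $[0,+\infty[$ (which only needs $a>0$, $p>1$) gives a lower bound that is linear in $\vert x\vertv$ in \emph{both} cases. This buys two things: you bypass $\e$-optimal controls and the statement of Lemma \ref{OC sono limitati} entirely (using only a step of its proof), and you obtain a slightly sharper lower bound under [8.a] than the paper records. The price is negligible, and your bookkeeping of the horizon-independence of the constants (boundedness of $e^{-(\lambda-\omega)s}\ro3(t,s)^{1/q}$, etc.) is exactly the point that makes both arguments work.
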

\begin{proof} Let $\e>0$ be fixed and $u_\e$ be an admissible control,
with $y_\e(\tau)=y(\tau;0,x,u_\e)$ the associated trajectory, such
that
$$\Psi(t,x)\ge\int_0^t
e^{-\lambda\tau}\big[g_0(y_\e(\tau))+h_0(u_\e(\tau))\big]d\tau+e^{-\lambda
t }\phi_0(y_\e(t))-\e.$$ Hence from the convexity of $g_0$ and
$h_0$, for a suitable positive constant $\gamma$, and by applying
$(\ref{muovastima})$  we derive
\begin{equation*}\begin{split}
\Psi(t,x)&\ge -\gamma\int_0^te^{-\lambda\tau}\bigg[1+\vert
y_\e(\tau)\vert_{\V}+\vert u_\e(\tau)\vert_U\bigg]d\tau-\e\\
&\ge-\frac{\gamma}{\lambda}-\gamma C\big[\vert x \vertv+\Vert
u_\varepsilon\Vert_{L_\lambda^p(0,t;U)}
+1\big]-\gamma \vert 1-e^{-\lambda
t}\vert^{\frac{1}{q}}\lambda^{-\frac{1}{q}}
\Vert u_\varepsilon\Vert_{L_\lambda^p(0,t;U)}-\e\\
\end{split}\end{equation*}  so that by means of
 Lemma \ref{OC sono limitati}
$$-\Psi(t,x)\le C(1+\vert x\vert^2_{\V})$$
when Assumptions \ref{asst2}[8.a] holds, and
$$-\Psi(t,x)\le C(1+\vert x\vert_{\V}),$$
when Assumptions \ref{asst2}[8.b] holds, for a suitable choice of
the constant $C$. The missing inequality derives from
$$\Psi(t,x)\le J_t(0,x,\bar u)$$
with $\bar u(\tau)=\bar u\in dom(h_0)$, when we apply
(\ref{fond3}) when [8.a] holds, or
(\ref{fond4}) when [8.b] holds.
\end{proof}

\begin{lemma}\label{stimegrad}
Let Assumptions \ref{asst2} hold, and $\lambda>\omega\max\{2,q\}$.
Then
\bigskip

 $(i)$\ \ \
$\displaystyle\sup_{t\ge0}[\Psi_x(t)]_L<+\infty$;


\bigskip
$(ii)$ \
$\displaystyle\sup_{t\ge0}\vert\Psi_x(t,0)\vert_V<+\infty.$
\end{lemma}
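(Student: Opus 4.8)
The plan is to reduce both bounds to the finite-horizon regularity estimate (\ref{R(t)}) via the scaling relation $\Psi(t,x)=e^{\lambda(T-t)}\phi_T(t,x)$ of Theorem \ref{main}, which is independent of $T$. First I would record that the strong solution $\phi_T$ of (\ref{HJBf}) attached to the infinite-horizon data is the one for the finite-horizon problem with running cost $g(\tau,\cdot)=e^{-\lambda\tau}g_0$ and terminal cost $\varphi=e^{-\lambda T}\phi_0$ (the Hamiltonian data play no role in (\ref{R(t)})). Consequently $[\varphi']_L=e^{-\lambda T}[\phi_0']_L$ and $[g_x(T-s,\cdot)]_L=e^{-\lambda(T-s)}[g_0']_L$, both finite since $g_0,\phi_0\in\Sigma_0(\V)\subset\clip(\V)$.

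For $(i)$, since $[\Psi_x(t)]_L=e^{\lambda(T-t)}[(\phi_T)_x(t)]_L$, I would substitute the data above into (\ref{R(t)}) and simplify. The factors $e^{\pm\lambda T}$ cancel and one is left with
$$[\Psi_x(t)]_L\le e^{(2\omega_0-\lambda)t}[\phi_0']_L+[g_0']_L\,\frac{1-e^{(2\omega_0-\lambda)t}}{\lambda-2\omega_0},$$
where $\omega_0=\omega\vee0$. The standing hypothesis $\lambda>\omega\max\{2,q\}$ gives $\lambda>2\omega_0$ (if $\omega>0$ this is $2\omega_0=2\omega\le\omega\max\{2,q\}<\lambda$; if $\omega\le0$ then $2\omega_0=0<\lambda$), so the exponent $2\omega_0-\lambda$ is negative and both summands stay bounded uniformly in $t$, yielding $\sup_{t\ge0}[\Psi_x(t)]_L\le[\phi_0']_L+[g_0']_L/(\lambda-2\omega_0)$. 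This step is essentially bookkeeping once the data identification and the sign of $2\omega_0-\lambda$ are settled.

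For $(ii)$ I would instead exploit convexity. For each fixed $t$, the function $\Psi(t,\cdot)=e^{\lambda(T-t)}\phi_T(t,\cdot)$ belongs to $\Sigma_0(\V)$, hence is convex and $C^1$ with $\Psi_x(t,0)\in V$. The gradient (subdifferential) inequality gives $\langle\Psi_x(t,0),v\rangle\le\Psi(t,v)-\Psi(t,0)$ for every $v\in\V$, and since $|\cdot|_V$ is dual to $|\cdot|_{\V}$,
$$|\Psi_x(t,0)|_V=\sup_{|v|_{\V}\le1}\langle\Psi_x(t,0),v\rangle\le\sup_{|v|_{\V}\le1}\big(\Psi(t,v)-\Psi(t,0)\big).$$
By Lemma \ref{stima psi} the right-hand side is bounded, uniformly in $t$, by $3C$ in either case [8.a] or [8.b] (using $|v|_{\V}\le1$), which establishes $(ii)$.

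The main obstacle I anticipate is the careful bookkeeping in $(i)$: correctly identifying the finite-horizon data producing $\Psi$ under the scaling, and checking that the accumulated exponential weights collapse to a genuinely negative exponent, which is exactly where the factor $2$ in $\omega\max\{2,q\}$ enters. Part $(ii)$ is then a short convexity argument resting on Lemma \ref{stima psi}, the only delicate point being to measure $\Psi_x(t,0)$ in the $V$-norm dual to the $\V$-norm over which the supremum is taken.
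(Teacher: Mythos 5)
Your proposal is correct, and it splits into two halves of different character relative to the paper. Part $(i)$ is essentially the paper's own argument: the paper also plugs $g(s,x)=e^{-\lambda s}g_0(x)$ and $\varphi(x)=e^{-\lambda T}\phi_0(x)$ into (\ref{R(t)}) and uses $[\Psi_x(t)]_L=e^{\lambda(T-t)}[\phi_x(t)]_L$; your bookkeeping is in fact slightly more careful, since you keep $\omega_0=\omega\vee 0$ from (\ref{R(t)}) and get the denominator $\lambda-2\omega_0$, where the paper writes $\lambda-\omega$ (an apparent slip, as the integral $\int_0^t e^{2\omega_0(t-s)}e^{-\lambda(T-s)}ds$ produces $\lambda-2\omega_0$). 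Note that in the case $\omega\le 0$ your inequality $2\omega_0=0<\lambda$ silently uses $\lambda>0$, which Assumptions \ref{asst2} do not literally state; the paper's proof makes the same implicit assumption (it needs $\lambda-2\omega>0$ with no comment), so this is not a gap relative to the paper. Part $(ii)$ is where you genuinely depart: the paper proves it by taking $\e$-optimal controls $u_\e$ at $(0,0)$, comparing the trajectories $y(\cdot\,;0,hz,u_\e)$ and $y(\cdot\,;0,0,u_\e)$, and estimating the difference quotients $\bigl(\Psi(t,hz)-\Psi(t,0)\bigr)/h$ in both directions via Lemma \ref{OC sono limitati} and the $\ro3$-estimates of Lemma \ref{stima y}, whereas you use only the convexity of $\Psi(t,\cdot)$ (the gradient inequality $\langle\Psi_x(t,0),v\rangle\le\Psi(t,v)-\Psi(t,0)$), the identification $|\Psi_x(t,0)|_V=\sup_{|v|_{\V}\le1}\langle\Psi_x(t,0),v\rangle$ (valid since $\V$ carries the dual norm of the Hilbert space $V$, which is reflexive), and the uniform two-sided growth bound of Lemma \ref{stima psi}. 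Your route is shorter and avoids re-running the control-theoretic estimates, at the price of relying on Lemma \ref{stima psi} as a black box; the paper's route is self-contained at the level of $\e$-optimal controls and runs parallel to the differentiability argument in Claim 4 of Theorem \ref{main}, but both are sound, and yours yields the explicit uniform bound $3C$.
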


\begin{proof} We use estimate (\ref{R(t)}) with $g(s,x)=e^{-\lambda s}g_0(x)$, and
$\varphi(x)=e^{-\lambda T}\phi_0(x)$  to derive
$$[\phi_x(t)]_L\le  e^{2\omega t}e^{-\lambda
T}\left([\phi_0^\prime]_L+\frac{[g_0^\prime]_L}{\lambda-\omega}\left(e^{(\lambda-\omega)t}-1\right)\right),$$
so that \begin{equation*}\begin{split}[\Psi_x(t)]_L&=
e^{\lambda(T-t)}[\phi_x(t)]_L\\
&\le e^{-(\lambda-2\omega)t}[\phi_0^\prime]_L+
\frac{[g_0^\prime]_L}{\lambda-\omega}(1-e^{-(\lambda-2\omega)t})\\
&\le [\phi_0^\prime]_L+ \frac{[g_0^\prime]_L}{\lambda-\omega},
\end{split}\end{equation*}
for all $t\ge0$.

Next we prove $(ii)$. Let $h$ be a real number $\vert h\vert\le1$
and $z\in\V$ such that $\vert z\vert_{\V}\le 1$.  Let $u_\e$ is
$\e$-optimal at $(0,0)$ with horizon $t$,
$y_{0,\e}(s):=y(s;0,0,u_\e)$ and $y_{h,\e}(s):=y(s;0,hz,u_\e)$,
then by means of (\ref{y-stima-1}) one has
$$\vert y_{0,\e}(t)\vertv\le \Vert B\Vert_{L(U,\V)}e^{\omega
t}\ro3(0,t)^{\frac{1}{q}}\Vert u_\e\Vert_{L^p(0,t;U)},$$ so that
\begin{equation}\begin{split}
&\frac{\Psi(t,hz)-\Psi(t,0)}{h}\le \\
&\ \ \ \ \ \ \ \le \int_0^t e^{-\lambda
s}[g_0(y_{h,\e}(s))-g_0(y_{0,\e}(s))]ds+e^{-\lambda t}[\phi_0(y_{h,\e}(t))-\phi_0(y_{0,\e}(t))]+\e\\
&\ \ \ \ \ \ \ \le \int_0^t e^{-\lambda s}\langle
g_0^\prime(y_{0,\e}(s)),e^{\omega s}\vert z\vertv\ranglev
ds+e^{\lambda t}\langle
\phi_0^\prime(y_{0,\e}(t)),e^{\omega t}\vert z\vertv\ranglev-\e\\
&\ \ \ \ \ \ \ \le  \vert z\vert_{\V}\left[\vert
g_0^\prime\vert_{B_1}\int_0^te^{-(\lambda-\omega) s}\left(1+ \vert
y_{0,\e}(s)\vert_{\V}\right)ds+e^{-(\lambda-\omega)
t}\vert\phi_0^\prime\vert_{B_1}(1+\vert
y_{0,\e}(t)\vert_{\V})\right]-\e\\
&\ \ \ \ \ \ \ \le \vert z\vert_{\V}\left[\vert
g_0^\prime\vert_{B_1}\left(\frac{1}{\lambda}+\Vert B\Vert\Vert
u_\e\Vert_{L^p_\lambda(0,t;U)}\int_0^t
e^{-(\lambda-2\omega) s}\ro3(0,s)^{\frac{1}{q}}ds\right)+ \right.\\
&\ \ \ \ \ \ \ \ \ \ \ \ \ \
+\left.\vert\phi_0^\prime\vert_{B_1}(e^{-(\lambda-\omega)
t}+e^{-(\lambda-2\omega) t}\ro3(0,t)^{\frac{1}{q}})
\right]-\e\\
\end{split}\end{equation}
Recalling  Lemma \ref{OC sono limitati}, using that $\lambda>q\omega$ and reasoning like in (\ref{omegapnot=lambda}) and
(\ref{rho31}), one obtains that the following quantities
$$\Vert u_\e\Vert_{L^p_\lambda(0,t;U)}, \ \ e^{-(\lambda-2\omega) t}\ro3(0,t)^{\frac{1}{q}}, \ \ \ \int_0^t
e^{-(\lambda-2\omega) s}\ro3(0,s)^{\frac{1}{q}}ds$$
 are
bounded by a constant (independent of $t$), by passing to limits
as $h\to0$  in the preceding inequality one derives
$$\sup_{t\ge0}\langle\Psi_x(t,0),z\rangle<+\infty.$$
On the other hand, by a similar reasoning, and with $u_{h,\e}$
$\e$-optimal at $(0,hz)$ with horizon $t$, there exists a positive
constant $\eta$ such that
\begin{equation}\begin{split}
&\frac{\Psi(t,0)-\Psi(t,hz)}{h}\le \eta\vert
z\vert_{\V}-\e\\
\end{split}\end{equation}
so that $$\sup_{t\ge0}\langle\Psi_x(t,0),-z\rangle<+\infty,$$ and
the proof is complete.
\end{proof}

\subsection{Proof of Theorem \ref{main}}
 We divide the long proof into several steps.
Let $ x\in \V$ be fixed, $\vert x\vertv\le r$, and let $0\le t_1<t_2$.

{\it Claim 1: Let $\e>0$ be fixed and let $u_\e\in
L^p_\lambda(0,t;U)$ be $\e$-optimal at starting point $(0,x)$ with
horizon $t$, and $y_\e(s):=y(s;0,x,u_\e)$ be the associated
trajectory. Then there exists a bounded continuous function
$\rho$, depending only from $r$,  with
$\lim_{t\to+\infty}\rho(t)=0,$ and such that:

$e^{-\lambda t}\vert y_\e(t)\vertv^2\le\rho(t)$, if Assumptions
\ref{asst2} [8.a] are satisfied;

$e^{-\lambda t}\vert y_\e(t)\vertv\le\rho(t)$, if Assumptions
\ref{asst2} [8.b] are satisfied.}

\noindent Indeed, applying  (\ref{y-stima-1,7}) and Lemma \ref{OC
sono limitati}, we derive
\begin{equation}\label{stimay}\begin{split}
e^{-\lambda t}\vert y_\e(t)\vert_{\V}&\le K_r e^{-(\lambda
-\omega)t}\left[1+\ro3(0,t)^{\frac{1}{q}}\right],\end{split}\end{equation}
with $K_r$ a suitable constant. Hence for a (possibly different)
constant $K_r>0$, we have
\begin{equation}\label{stimay2}\begin{split}e^{-\lambda t}\vert y_\e(t)\vertv^2&
\le K_r e^{-(\lambda -2\omega)t}(1+\ro3(0,t)^{\frac{1}{q}}+\ro3(0,t)^{\frac{2}{q}}).\\
\end{split}\end{equation}
By proceeding as in the proof of Lemma \ref{stima y}  one sees that the following
functions are infinitesimal as $t$ goes to $+\infty$
$$e^{-(\lambda -\omega)t}\ro3(0,t)^{\frac{1}{q}},\ \ \
e^{-(\lambda -2\omega)t}\ro3(0,t)^{\frac{1}{q}},$$ so that what
is left to show is
$$e^{-(\lambda -2\omega)t}\ro3(0,t)^{\frac{2}{q}},\ \ t\to0.$$
The property is straightforward in the case $\lambda=\omega p$, as
$$e^{-(\lambda -2\omega)t}\ro3(0,t)^{\frac{2}{q}}=e^{-(\lambda -2\omega)t}\vert t\vert^{\frac{2}{q}},$$
while for $\lambda<\omega p$ one has
$$e^{-(\lambda -2\omega)t}\ro3(0,t)^{\frac{2}{q}}\le Ce^{-(\lambda
-2\omega)t}.$$ Finally, if $\lambda>\omega p$,
$$e^{-(\lambda -2\omega)t}\ro3(0,t)^{\frac{2}{q}}\le Ce^{-(\lambda -2\omega)t}
e^{2\left(\frac{\lambda}{p}
-\omega\right)t}=Ce^{\frac{\lambda}{p}\left(2-p\right)t}$$ and
Claim 1 is proved.

{\it Claim 2: $\displaystyle \lim_{t_1\to+\infty,t_1<t_2}
\Psi(t_1,x)-\Psi(t_2,x)\le0$ }

Let $\varepsilon>0$ be arbitrarily fixed,  let $u_\varepsilon\in
L^p_\lambda(0,t_2;U)$ be such that
$$\Psi(t_2,x)\ge J_{t_2}(0,x,u_\varepsilon)-\epsilon$$ and let
$y_\e(\tau):=y(\tau;0,x,u_\e)$. Then
\begin{equation}\begin{split}
\Psi(t_2,x)&\ge \int_0^{t_1}e^{-\lambda
s}[g_0(y_\varepsilon(s))+h_0(u_\varepsilon(s))]ds+ e^{-\lambda
  t_1}J_{t_2-t_1}(0,y_\varepsilon(t_1),u_\varepsilon(\cdot+t_1))-\varepsilon\\
  &\ge \Psi(t_1,x)-e^{-\lambda
t_1}\phi_0(y_\varepsilon(t_1))+e^{-\lambda
t_1}\Psi(t_2-t_1,y_\varepsilon(t_1))-\varepsilon\end{split}\end{equation}
where we used  (\ref{funzio}). Consequently, when [8.a] holds
\begin{equation}\begin{split}\Psi(t_1,x)-\Psi(t_2,x)
&\le C e^{-\lambda t_1}(1+\vert
y_\varepsilon(t_1)\vertv^2)+\varepsilon
\end{split}\end{equation}while for [8.b]
\begin{equation}\begin{split}\Psi(t_1,x)-\Psi(t_2,x)
&\le C e^{-\lambda t_1}(1+\vert
y_\varepsilon(t_1)\vertv)+\varepsilon
\end{split}\end{equation}
for a suitable constant $C$, and the last implies Claim 2 as a
consequence of Claim 1, and Lemma \ref{stima psi}.

{\it Claim 3: $\displaystyle \lim_{t_1\to+\infty,t_1<t_2} \Psi(t_2,x)-\Psi(t_1,x)\le0$
}

If we choose $\varepsilon>0$ and $v_\varepsilon\in
L^p_\lambda(0,t_1;U)$ so that
$$\Psi(t_1,x)\ge J_{t_1}(0,x,v_\varepsilon)-\varepsilon,$$
and set $y_\varepsilon(s):=y(s,0,x,u_\varepsilon)$,
then
by the DDP contained in (\ref{PPDT})
 we obtain
\begin{equation}\begin{split}\Psi(t_2,x)-\Psi(t_1,x)&\le
e^{-\lambda t_1}\Psi(t_2-t_1,y_\varepsilon(t_1))-e^{-\lambda
t_1}\phi_0(y_\varepsilon(t_1))+\varepsilon\\
\end{split}\end{equation}
which leads as before to the conclusion. Since the constants
involved in the estimates are uniform in $x$, for $x$ varying in a
bounded subset of $\V$, we derive the convergence $\Psi(t,x)\to
\Psi_\infty(x)$, as $t\to +\infty$ is uniform on bounded subsets
of $\V$.

Next we discuss the convergence of gradients.

{\it Claim 4: $\Psi_\infty$ is
Frech\'et differentiable, with differential $\Psi^\prime_\infty$ and,
for every fixed $x\in \V$
$$\Psi_x(t,x)\to\Psi^\prime_\infty(x), \ {\rm weakly\ in\ }V,\  {\rm as\ \ }t\to+\infty.$$}
 Let $x$ be fixed in $\V$, $h$ a real parameter,
 with $h\in[-1,1]$, $y$ in $\V$ with $\vert y\vert_{\V}\le 1$, and
$\xi_t(h;x,y)\equiv\xi_t(h):=\Psi(t,x+hy)$. Then
$$\xi_t^\prime(h):=\langle\Psi_x(t,x+hy), y\rangle.$$
Note that, since $\xi_t(h)\to\xi_\infty(h)\equiv
\Psi_\infty(x+hy)$ as $t\to+\infty$, if we show that
$\xi_t^\prime(h)$ converges uniformly in $[-1,1]$ to some function
as
 $t\to+\infty$ (or along a subsequence),  then  such function is
 $\xi_\infty^\prime(h)$. We do so
  by means of Ascoli--Arzel\`a Theorem.
We have
$$\vert\xi_t^\prime(h)-\xi_t^\prime(k)\vert\le\vert
y\vert^2_{\V}[\Psi_x(t))]_L\vert h-k\vert_{\mathbb{R}}$$ which implies, by
Lemma \ref{stimegrad} $(i)$, that the family
$\{\xi_t^\prime\}_{t\ge0}$ is equicontinuous (more precisely,
equilipschitzean). Moreover
\begin{equation}\begin{split}
\vert\xi_t^\prime(h)\vert&\le \vert
y\vert_{\V}\vert\Psi_x(t,x+hy)\vert_V\\
&\le\vert y\vert_{\V}\left([\Psi_x(t)]_L\vert x+h
y\vert_{\V}+\vert\Psi_x(t,0)\vert_V\right)
\end{split}\end{equation} from which follows, by means of Lemma \ref{stimegrad}
$(ii)$, that $\{\xi_t^\prime\}_{t\ge0}$ is  uniformly bounded.
Consequently, $\Psi_\infty$ is Gateaux differentiable. Indeed,
there exists the following limit
$$\lim_{h\to0}\frac{\Psi_\infty(x+hy)-\Psi_\infty(x)} {h}=\xi_\infty^\prime(0)=: \langle
\nabla\Psi_\infty(x),y\ranglev,$$ where
 $\nabla\Psi_\infty$
 indicates the Gateaux differential of $\Psi_\infty$. In
 particular, what we prove implies
 also
$$\Psi_x(t,x)\to \nabla\Psi_\infty(x)~{\rm  weakly ~in~}V~{\rm  as}~ t\to+\infty.
$$ Finally we show that $\nabla\Psi_\infty$
is continuous. It suffices to pass to limits as $t$ goes to
$\infty$ in \begin{equation}\begin{split}
\vert\xi_t^\prime(0;x,y)-\xi_t^\prime(0;z,y)\vert&\le\vert
y\vert_{\V}\vert \Psi_x(t,x)-\Psi(t,z)\vert_V\\
&\le \vert y\vert_{\V}\sup_{t\ge0}[\Psi_x(t)]_L\vert
x-z\vertv.\end{split}\end{equation} Hence $\Psi_\infty$ is
Frech\'et differentiable with Frech\'et differential
$\Psi_\infty^\prime=\nabla\Psi_\infty$. The proof that
$\Psi_\infty$ is convex and in $C^1_{Lip}(\V)$ is trivial by means
of Lemma \ref{stimegrad}

\subsection{Proof of Theorem \ref{th:mainnew} $(i)$}

 First of all we show that, for any
fixed $t$, $x$ and $u\in L^p_\lambda(0,+\infty)$ we have
\begin{equation}\label{J_t-J_infty}\exists \lim_{t\to +\infty}J_t(0,x,u)
=J_\infty(0,x,u).\end{equation}
We separately show that, if $y(s)=y(s;0,x,u)$, then
\begin{equation}\label{limiti}
\lim_{t\to+\infty}\left\vert e^{-\lambda
t}\phi_0(y(t))\right\vert=0, \ \ \ {\rm and}\ \ \
J_\infty(t,x,u)=\lim_{t\to+\infty}\int_0^te^{-\lambda
\tau}[g_0(y(\tau))+h_0(u(\tau))]d\tau.\end{equation} Indeed
$$\vert y(t)\vertv\le Ke^{\omega
t}(1+\ro3(0,t)^{\frac{1}{q}})$$ where $K=\vert x\vertv\vee\Vert u\Vert_{L^p_\lambda(0,+\infty;U)}$. Hence
\begin{equation}\label{stimay2new} e^{-\lambda t}\vert y(t)\vertv^2\le
\rho(t),\ \ e^{-\lambda t}\vert y(t)\vertv\le \rho(t)\end{equation} where $\rho(t)$ denotes
some positive function with $\lim_{t\to+\infty}\rho(t)=0$ (obtained as in proof of Claim 1 of Theorem
\ref{main}). These inequalities combined with Assumptions
\ref{asst2} [8.a]  or [8.b] -- recall that $\phi_0$ and $g_0$ are
either sublinear or subquadratic -- give the first equality in
(\ref{limiti}), and by means of dominated convergence also
$$\int_0^t e^{-\lambda s}g_0(y(s))ds\to \int_0^{+\infty} e^{-\lambda s}g_0(y(s))ds,
\ \ as\ \ t\to +\infty.$$ To complete the proof of (\ref{limiti})
we just observe
$$\int_0^t e^{-\lambda s}h_0(u(s))ds\to \int_0^{+\infty} e^{-\lambda s}h_0(u(s))ds,
\ \ as\ \ t\to +\infty$$
by monotone convergence, for $h_0$ is bounded from below.

Then (\ref{J_t-J_infty}) is proved. As a consequence, by passing
to limits as $t$ tends to $+\infty$ in
$$\Psi(t,x)\le J_t(0,x,u),\ \ \ \forall u\in L^p_\lambda(0,+\infty;U)$$
we obtain
$$\Psi_\infty(x)\le J_\infty(0,x,u), \ \ \forall u\in L^p_\lambda(0,+\infty;U)$$
which implies
$$\Psi_\infty(x)\le Z_\infty(0,x).$$
We now need to show that the reverse inequality holds. Let $\varepsilon>0$ be fixed.
  From (\ref{fond}) and (\ref{fond3}) we derive
that if $u^*$ is $\varepsilon$-optimal at $(0,x)$ with horizon $+\infty$
\begin{equation}\label{ennio}J_\infty(0,x,u^*)\le
C(1+\vert x\vertv^2)+\varepsilon\end{equation} with $C$ a suitable
constant. Hence, we set
 \begin{equation} u_t(s)=\begin{cases}u_1(s)&s\in[0,t]\\
u_2(s)&s\in]t,+\infty[\end{cases}\end{equation} where $u_1\in
L^p_\lambda(0,t;U)$ is $\varepsilon$-optimal for $J_t$ at $(0,x)$,
and $u_2\in L^p_\lambda(t,+\infty;U)$ is $\varepsilon$-optimal for
$J_\infty$ at $(0,y_t(t))$, with $y_t(s):=y(s;0,x,u_t)$, and we
derive by means of (\ref{funzio}) and (\ref{ennio}) the following
chain of inequalities
\begin{equation}\begin{split}
Z_\infty(0,x)&\le J_\infty(0,x,u_t)\\
&=J_t(0,x,u_1)+e^{-\lambda t}\big[J_\infty(0, y_t(t), u_2)-\phi_0(y_t(t))\big]\\
&\le \Psi(t,x)+ Ce^{-\lambda t}(1+\vert y_t(t)\vertv+\vert y_t(t)\vertv^2)+2\varepsilon\\
&=: \Psi(t,x)+ \rho(t)+2\varepsilon
\end{split}\end{equation}
and with $C$ some suitable constant (possibly different from the one
mentioned above). Note that $\rho(t)\to 0$, as $t\to+\infty$ as one
derives from Claim 1 in the proof of Theorem \ref{main}. Hence, by
passing to limits as $t$ goes to $+\infty$, we derive
$$Z_\infty(0,x)\le \Psi_\infty(x),$$
 and the proof is complete.

\subsection{Proof of Theorem \ref{th:mainnew} $(ii)$}

To prove the theorem we make use of the Dynamic Programming
Principle (DPP from now on) contained in the following Lemma.
\begin{lemma}\label{DPP}
We have
$$\Psi_\infty(x)=\inf_{u\in L^p_\lambda(0,+\infty;U)}\left\{\int_0^t e^{-\lambda s}(g_0(y(s))+h_0(u(s)))ds
+e^{-\lambda t}\Psi_\infty(y(t))\right\},\ \ \forall t>0.$$
Moreover, given any $\varepsilon >0$, if $u_\varepsilon$ is such
that
$$
J_\infty(0,x,u_\varepsilon)<\Psi_\infty(x) + \varepsilon
$$
then also
$$
\int_0^t e^{-\lambda s}(g_0(y(s))+h_0(u(s)))ds +e^{-\lambda
t}\Psi_\infty(y(t))< \Psi_\infty(x) + \varepsilon
$$
\end{lemma}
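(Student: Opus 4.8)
The plan is to derive both the infimum identity and the near-optimality propagation from a single semigroup ``shift'' identity, together with the already-established fact (Theorem \ref{th:mainnew}$(i)$) that $\Psi_\infty(\cdot)=Z_\infty(0,\cdot)=\inf_u J_\infty(0,\cdot,u)$. First I would record the causality relation for mild solutions: fixing $t>0$, $u\in L^p_\lambda(0,+\infty;U)$ and writing $\tilde u(\cdot):=u(\cdot+t)$, the variation-of-constants formula together with $e^{(\sigma+t)A}=e^{\sigma A}e^{tA}$ gives $y(\sigma+t;0,x,u)=y(\sigma;0,y(t),\tilde u)$ for all $\sigma\ge0$; note also that $\tilde u\in L^p_\lambda(0,+\infty;U)$ since $\int_0^{+\infty}|\tilde u(\sigma)|_U^p e^{-\lambda\sigma}\,d\sigma=e^{\lambda t}\int_t^{+\infty}|u(s)|_U^p e^{-\lambda s}\,ds<+\infty$. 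Splitting the integral defining $J_\infty$ at $s=t$ and changing variables in the tail then yields the key identity
\begin{equation}\label{shift}
J_\infty(0,x,u)=\int_0^t e^{-\lambda s}\big[g_0(y(s))+h_0(u(s))\big]\,ds+e^{-\lambda t}J_\infty(0,y(t),\tilde u).
\end{equation}

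For the superoptimality inequality ($\ge$) I would fix an arbitrary $u\in L^p_\lambda(0,+\infty;U)$ and bound the tail in \eqref{shift} below by $J_\infty(0,y(t),\tilde u)\ge Z_\infty(0,y(t))=\Psi_\infty(y(t))$, using Theorem \ref{th:mainnew}$(i)$. This shows that $J_\infty(0,x,u)$ is at least the bracketed expression, hence at least its infimum over $u$; taking the infimum of the left-hand side and invoking $\Psi_\infty(x)=\inf_u J_\infty(0,x,u)$ gives ``$\ge$''. The same estimate applied to $u=u_\varepsilon$ proves the second assertion at once: \eqref{shift} and $J_\infty(0,y(t),\tilde u_\varepsilon)\ge\Psi_\infty(y(t))$ show that the bracketed quantity is $\le J_\infty(0,x,u_\varepsilon)<\Psi_\infty(x)+\varepsilon$.

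For the reverse (suboptimality) inequality I would use a concatenation argument. Since only $u|_{[0,t]}$ enters both the integral and the endpoint $y(t)$, it suffices to work with $u\in L^p_\lambda(0,t;U)$. Fix $\varepsilon>0$ and choose $v\in L^p_\lambda(0,+\infty;U)$ with $J_\infty(0,y(t),v)\le\Psi_\infty(y(t))+\varepsilon$, and define $w:=u$ on $[0,t]$ and $w(s):=v(s-t)$ on $]t,+\infty[$. The only point needing care is admissibility of $w$: since $\int_t^{+\infty}|v(s-t)|_U^p e^{-\lambda s}\,ds=e^{-\lambda t}\|v\|_{L^p_\lambda(0,+\infty;U)}<+\infty$ and $u\in L^p$ on a bounded interval, indeed $w\in L^p_\lambda(0,+\infty;U)$. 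Applying \eqref{shift} to $w$ and using $\Psi_\infty(x)\le J_\infty(0,x,w)$ gives
\begin{equation*}
\Psi_\infty(x)\le\int_0^t e^{-\lambda s}\big[g_0(y(s))+h_0(u(s))\big]\,ds+e^{-\lambda t}\Psi_\infty(y(t))+\varepsilon e^{-\lambda t}.
\end{equation*}
Taking the infimum over $u$ and then letting $\varepsilon\to0$ yields ``$\le$'', completing the equality. The argument is essentially standard once Theorem \ref{th:mainnew}$(i)$ is in hand; the only genuinely technical points are the shift identity \eqref{shift} and the $L^p_\lambda$-admissibility of the concatenation, neither of which is deep.
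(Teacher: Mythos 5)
Your proof is correct, and since the paper declares this lemma ``standard'' and omits its proof entirely, your argument supplies exactly the missing standard one: the shift identity is the infinite-horizon analogue of the paper's identity (\ref{funzio}), and the concatenation step mirrors the construction of $u_t$ in the paper's own proof of Theorem \ref{th:mainnew}$(i)$. Your repeated appeal to Theorem \ref{th:mainnew}$(i)$ is also legitimate rather than circular, because part $(i)$ is proved independently of this lemma, which is only invoked afterwards in the proof of part $(ii)$.
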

The proof of this lemma is standard and we omit it.

We prove then Theorem \ref{th:mainnew} $(ii)$. Let $t>0$ be fixed.
Let also $u(s)\equiv \bar u\in dom(h_0)$,  and let $\bar
y(s):=y(s;0,x,\bar u)$. Then the DPP implies
\begin{equation}\label{ada}\frac{e^{-\lambda t}\Psi_\infty(\bar y(t))-\Psi_\infty(x)}{t}
\ge-\frac{1}{t}\int_0^te^{-\lambda s}[g_0(\bar y(s))+h_0(\bar u)] ds.\end{equation}
Since
$$\frac{\bar y(t)-\bar y(0)}{t}=\frac{e^{At}x-x}{t}+\frac{1}{t}\int_0^te^{A(t-s)}B\bar u ds
\to Ax+B\bar u,\ \  as \ \ t\to 0,$$ and $s\mapsto
g_0(y(s))e^{-\lambda s}\in C(0,t;U)$, then we may pass to limits
in (\ref{ada}) and obtain
$$-\lambda \Psi_\infty(x)+\langle \Psi^\prime_\infty(x),
Ax\rangle+\langle\Psi^\prime_\infty(x), B\bar u\rangle +h_0(\bar
u)+g_0(x)\ge0,$$ and take the infimum of both sides as $\bar u\in
dom(h_0)$ and derive
$$ -\lambda \Psi_\infty(x)+
\langle \Psi^\prime_\infty(x),
Ax\rangle-h_0^*(-B^*\Psi^\prime_\infty(x))+g(x)\ge0.$$ Next we
prove the reverse inequality. Let $\e>0$ and $t\in[0,1]$
arbitrarily fixed, and let $u_\e$ be an $\e t$- optimal control at
$(0,x)$ with horizon $+\infty$, and $y_\e$ be the associated
trajectory. Then by the DPP
$$e^{-\lambda t}\Psi_\infty(y_\e(t))-\Psi_\infty(x)+\int_0^t e^{-\lambda s}
[g_0(y_\e(s))+h_0(u_\e(s))]ds\le \e t, \ \forall t\in[0,1].$$
Since $\Psi_\infty$ is convex and differentiable, the preceding
implies
\begin{equation}\begin{split}\label{davide}e^{-\lambda t}\langle\Psi^\prime_\infty(x),
\frac{y_\e(t)-x}{t}\rangle &-\Psi_\infty(x)\frac{1-e^{-\lambda
t}}{t}+\\
&+\frac{1}{t}\int_0^t e^{-\lambda s}
[g_0(y_\e(s))+h_0(u_\e(s))]ds\le \e , \ \forall
t\in[0,1].\end{split}\end{equation} Now we show that
\begin{equation}\label{silvia}\frac{1}{t}\int_0^t e^{-\lambda s}
g_0(y_\e(s))ds=g_0(x)+\rho (t)\end{equation} where by $\rho(t)$,
we denote some real function not depending
on $u_\e$ such that $\rho(t)\to0$ as $t\to0$. Indeed the
assumptions on $g_0$ imply that
$$\vert g_0(x)-g_0(y)\vert\le C(\vert x\vertv,\vert y\vertv)\vert x-y\vertv$$ with
$$C(\alpha,\beta):=\big([g_0^\prime]_{L}+\vert g_0^\prime(0)\vert\big)
\big(1+\alpha\vee\beta).$$
 Moreover by Lemma \ref{OC sono
limitati} and by (\ref{y-stima-1,7}) we derive
\begin{equation}\vert y_\e(s)-x\vertv\le\vert
e^{sA}x-x\vertv+C(1+\vert x\vertv^2)e^{\omega
s}\ro3(0,s)^{\frac{1}{q}}\end{equation} for some constant $C$
(independent of $u_\e$ and $x$) and with $\ro3$ the function
defined in Lemma \ref{stima y}, which has as a consequence
$$\sup_{s\in [0,1]}\vert y_\e(s)\vertv<K(x)<+\infty,$$ with $K(x)$
not depending on $u_\e$. Hence for $t\in[0,1]$
\begin{equation}\begin{split}
&\frac{1}{t}\int_0^t \left\vert e^{-\lambda s}
g_0(y_\e(s))ds-g_0(x) \right\vert ds \le
\frac{1}{t}\int_0^t e^{-\lambda s} |
g_0(y_\e(s))-g_0(x)| ds+\rho(t)\\
&\ \ \ \ \ \le C(\vert
x\vertv,K(x)) \left[ C(1+\vert x\vertv^2)\frac{1}{t}\int_0^t
e^{-(\lambda-\omega)s}\ro3(0,s)^{\frac{1}{q}}ds +
\frac{1}{t}\int_0^t e^{-\lambda s}\vert e^{sA}x-x\vertv ds \right]
\\
&\  \ \ \  \ \ \ \ \ \ \ \ \ \ \ \ \
+\rho(t),
\end{split}\end{equation}
which implies (\ref{silvia}) by definition of $\ro3(0,s)$.

Observe now  that, as $x\in D(A)$, then
$$\frac{y_\e(t)-x}{t}=Ax+\rho (t)+\frac{1}{t}\int_0^t e^{(t-s)A}Bu_\e(s)ds.$$
Then, the last and (\ref{silvia}) imply  that (\ref{davide}) can
be written as
\begin{equation}\begin{split}\langle&\Psi^\prime_\infty(x),
Ax\rangle -\lambda \Psi_\infty(x)+g(x)+\\
&+\frac{1}{t}\int_0^t e^{-\lambda s}\big[\langle e^{-\lambda
(t-s)} B^* e^{(t-s)A^*}\Psi^\prime_\infty(x), u_\e(s)\rangle+
h_0(u_\e(s))\big]ds \le\e +\rho(t),\\
& \ \ \forall t\in[0,1].\end{split}\end{equation} We then get that
\begin{equation}\begin{split}
&\frac{1}{t}\int_0^t e^{-\lambda s}\big[\langle e^{-\lambda (t-s)}
B^* e^{(t-s)A^*}\Psi^\prime_\infty(x), u_\e(s)\rangle+
h_0(u_\e(s))\big]ds\ge \\
&\ge -\frac{1}{t}\int_0^t e^{-\lambda s}\sup_{u\in U}\big[\langle
-e^{-\lambda (t-s)} B^* e^{(t-s)A^*}\Psi^\prime_\infty(x),
u\rangle- h_0(u)\big]ds=\\
&=-\frac{1}{t}\int_0^t e^{-\lambda s}h_0^*\left(-e^{-\lambda
(t-s)} B^*
e^{(t-s)A^*}\Psi^\prime_\infty(x)\right)ds\\
&=-h_0^*(-B^*\Psi^\prime_\infty(x))+\rho(t),\end{split}\end{equation}
for $h_0^*\in C^1_{Lip}(U)$ by assumption. Hence
$$\langle\Psi^\prime_\infty(x),
Ax\rangle -\lambda
\Psi_\infty(x)+g(x)-h_0^*(-B^*\Psi^\prime_\infty(x))\le \e+\rho(t),
\ \ \forall t\in [0,1],$$ which implies the thesis by passing to
limits as $t\to0$.

\subsection{Verification Theorem}
We state and prove the following verification theorem:

\begin{theo}\label{th:verif}
Let Assumptions \ref{asst2} hold. Let $t\ge 0$ and $x\in \V$ be
fixed. Then
\begin{equation} \label{eq:lemmafond}
e^{-\lambda t}\Psi_{\infty}(x)= J_\infty(t,x,u) - \int_t^T
 e^{-\lambda s}\big[
 h_0^*(-B^*\Psi_\infty^\prime(y(s)))+(B^*\Psi_\infty^\prime(y(s))\; \vert\; u(s))_U+h_0(u(s))]\big]
ds.
\end{equation}
As a consequence, an admissible pair $(u,y)$ at $(t,x)$ is optimal
if and only if
$$
\sup_{u\in U}\left\{ \left(u|-B^*\Psi^\prime_{\infty}(y(s))
\right)_U - h_0 (u)
 \right\}=
\left(u(s)|-B^*\Psi^\prime_{\infty}(y(s))  \right)_U - h_0 (u(s))
$$
for a.e. $s\ge 0$, which is equivalent to
$$
u(s) = (h_0^*)'(-B^*\Psi^\prime_{\infty}(y(s))
$$
for a.e. $s\ge 0$.
\end{theo}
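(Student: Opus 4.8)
The plan is to reduce the whole statement to the fundamental identity \eqref{eq:lemmafond} and then read off optimality from the Fenchel--Young inequality. Write $p(s):=-B^*\Psi_\infty^\prime(y(s))$ and
\[\Delta(s):=h_0^*\big(-B^*\Psi_\infty^\prime(y(s))\big)+\big(B^*\Psi_\infty^\prime(y(s))\;\vert\;u(s)\big)_U+h_0(u(s)).\]
By the Fenchel--Young inequality for the conjugate pair $(h_0,h_0^*)$ one has $h_0^*(p(s))+h_0(u(s))\ge (p(s)\;\vert\;u(s))_U$, i.e. $\Delta(s)\ge 0$ for a.e. $s$. Hence, once \eqref{eq:lemmafond} is available, $J_\infty(t,x,u)=e^{-\lambda t}\Psi_\infty(x)+\int_t^{+\infty}e^{-\lambda s}\Delta(s)\,ds\ge e^{-\lambda t}\Psi_\infty(x)$, and by Theorem \ref{th:mainnew}$(i)$ the right-hand side is $Z_\infty(t,x)$. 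Thus $(u,y)$ is optimal iff $\int_t^{+\infty}e^{-\lambda s}\Delta(s)\,ds=0$, i.e. iff $\Delta(s)=0$ for a.e. $s$, which is precisely equality in Fenchel--Young, i.e. $u(s)\in\partial h_0^*(p(s))$. Since $h_0^*\in\Sigma_0(V)\subset\clip(V)$ (Assumption \ref{asst2}$[6]$), this subdifferential is the singleton $\{(h_0^*)'(p(s))\}$, giving at once the $\sup$-characterization and the feedback formula $u(s)=(h_0^*)'(-B^*\Psi_\infty^\prime(y(s)))$.

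It remains to prove \eqref{eq:lemmafond}. Formally, differentiating $s\mapsto e^{-\lambda s}\Psi_\infty(y(s))$ along the trajectory (where $y'=Ay+Bu$) gives
\[\frac{d}{ds}\big[e^{-\lambda s}\Psi_\infty(y(s))\big]=e^{-\lambda s}\Big[-\lambda\Psi_\infty(y(s))+\langle\Psi_\infty^\prime(y(s)),Ay(s)\rangle+\big(B^*\Psi_\infty^\prime(y(s))\;\vert\;u(s)\big)_U\Big].\]
Inserting the stationary HJB equation of Theorem \ref{th:mainnew}$(ii)$, namely $-\lambda\Psi_\infty(z)+\langle\Psi_\infty^\prime(z),Az\rangle=h_0^*(-B^*\Psi_\infty^\prime(z))-g_0(z)$, the bracket collapses to $\Delta(s)-g_0(y(s))-h_0(u(s))$. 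Integrating over $[t,T]$ and using $y(t)=x$ yields
\[e^{-\lambda t}\Psi_\infty(x)=e^{-\lambda T}\Psi_\infty(y(T))+\int_t^T e^{-\lambda s}\big[g_0(y(s))+h_0(u(s))\big]\,ds-\int_t^T e^{-\lambda s}\Delta(s)\,ds.\]
Letting $T\to+\infty$ gives \eqref{eq:lemmafond}: the boundary term tends to $0$ because $|\Psi_\infty(z)|\le C(1+\vert z\vertv^2)$ (Lemma \ref{stima psi}) while $e^{-\lambda T}\vert y(T)\vertv^2\to0$ by the decay estimate of Claim 1 in the proof of Theorem \ref{main}, and the cost integral converges to $J_\infty(t,x,u)$ by definition.

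The main obstacle is that this computation is only formal: $y$ is the mild solution \eqref{mild}, hence not differentiable in $\V$, and $\Psi_\infty$ is a classical solution only in the sense of Definition \ref{defsolSHJB}, i.e. the equation is known pointwise on $D(A)$, whereas $y(s)$ generally leaves $D(A)$. To justify it I would regularize. Since in the extended setting $B\in L(U,\V)$ is bounded into the state space (Assumption \ref{asst2}$[2]$), for $x\in D(A)$ and a control smooth in time the mild solution $y$ is in fact a strict solution: $y(s)\in D(A)$ for all $s$, $y\in C^1$, and $y'(s)=Ay(s)+Bu(s)$. For such data the chain rule applies and the HJB equation can be evaluated legitimately at $y(s)\in D(A)$, so the displayed identity holds exactly. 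I would then remove the two regularizations by approximation: first let the smooth controls converge to a general admissible $u$ in $L^p_\lambda$, passing to the limit inside the integrals by means of the uniform bounds of Lemma \ref{stima y} and Lemma \ref{OC sono limitati}, the continuous dependence of trajectories on controls in $\V$, the continuity of $\Psi_\infty^\prime$ and of $(h_0^*)'$, and the coercivity/lower semicontinuity of $h_0$ to control the term $\int e^{-\lambda s}h_0(u(s))\,ds$; then remove the restriction $x\in D(A)$ using the density of $D(A)$ in $\V$ and the $\clip$ continuity of $\Psi_\infty$ furnished by Theorem \ref{main}. The delicate point throughout is ensuring that all passages to the limit, especially the one involving the possibly infinite-valued $h_0$, preserve the identity; the a priori estimates guarantee that every integral above is finite whenever $J_\infty(t,x,u)<+\infty$, which is the only case relevant for optimality.
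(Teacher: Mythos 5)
Your proposal is correct and follows essentially the same route as the paper's own proof: differentiate $s\mapsto e^{-\lambda s}\Psi_\infty(y(s))$ along the trajectory, insert the stationary HJB equation with the added-and-subtracted $e^{-\lambda s}h_0(u(s))$ term, integrate on $[t,T]$, kill the boundary term as $T\to+\infty$ via Lemma \ref{stima psi} together with the decay estimate of Claim 1, and characterize optimality through equality in the Fenchel--Young inequality, finishing with a density argument for general $x\in\V$. Your smooth-control regularization to justify the chain rule is in fact more careful than the paper, which simply asserts a.e.\ differentiability for $x\in D(A)$ and admissible $u$; otherwise the two arguments coincide.
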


\begin{proof}

Let first $x\in D(A)$, $t>0$ be fixed. Let $u$ be any admissible
control at $(t,x)$ such that $J_T (t,x,u) <+\infty$ for every
$T>0$. (Note that an admissible control violating this condition
cannot be optimal for the infinite horizon problem, as  $J_T
(t,x,u) =+\infty$ for some $T>0$ implies $J_\infty (t,x,u)
=+\infty$.) Let $y$ be the associated trajectory. Then for a.e.
$s\in[t,+\infty[$ we may differentiate $e^{-\lambda
s}\Psi_{\infty}(y(s))$ as function of $s$ to obtain
\begin{equation}\begin{split}
\frac{d}{ds}e^{-\lambda s}\Psi_{\infty}(y(s))&=- \lambda
e^{-\lambda s}\Psi_{\infty}(y(s))+
 e^{-\lambda s}\langle
\Psi^\prime_{\infty}(y(s)),Ay(s)+Bu(s) \rangle \\
=h_0^*(-B^*\Psi_\infty^\prime(y(s)))&-g_0(y(s))+(
B^*\Psi^\prime_{\infty}(y(s))\;\vert\; u(s) )_U
 +e^{-\lambda s}h_0(u(s)) -e^{-\lambda
 s}h_0(u(s))\end{split}\end{equation}
where we used the fact that $\Psi_{\infty}$ solves the stationary
HJB equation, and  we added and subtracted the term $e^{-\lambda
s}h_0(u(s))$. Integrating such equation on $[t,T]$ we have
\begin{equation}\label{eq:lemmafondT}\begin{split}
e^{-\lambda T}\Psi_{\infty}(y(T))-e^{-\lambda
t}\Psi_{\infty}(x)&=\\
=\int_t^T e^{-\lambda s}\big[
h_0^*(-B^*\Psi_\infty^\prime(y(s)))&+(B^*\Psi_\infty^\prime(y(s))\;
\vert\; u(s))_U+h_0(u(s))]\big]
ds+\\
&-J_T(t,x,u)+e^{-\lambda T}\phi_0(y(T)).
\end{split}\end{equation}
Such relation holds for all admissible controls.  If now we show
that, for any fixed admissible control $u$, is
$$e^{-\lambda T}\Psi_{\infty}(y(T))\to 0, \ and\ e^{-\lambda
T}\phi_0(y(T))\to 0, \ as \ T\to +\infty,$$ then we may pass to
limits in (\ref{eq:lemmafondT}) and derive (\ref{eq:lemmafond}).
Indeed, using the fact that $\phi_0$ and $\Psi_\infty$ are either
sublinear or subquadratic, we observe that, by (\ref{stimay2new})
and (\ref{stimaynew})
$$e^{-\lambda T}\vert \Psi_{\infty}(y(T))+\phi_0(y(T))\vert\le C
K\rho(T),$$ where $K$ and $\rho$ are the same there
considered, and $C$ a suitable positive constant. Then $(\ref{eq:lemmafond})$ is proved. It also implies
that $\Psi_{\infty}(x)\le J_\infty(0,x,u)$ and that the equality
holds if and only if
$$
\int_t^{+\infty}
 e^{-\lambda s}\big[
 h_0^*(-B^*\Psi_\infty^\prime(y(s)))+
 (B^*\Psi_\infty^\prime(y(s))\; \vert\; u(s))_U+h_0(u(s))]\big]ds=0
$$
which means, by the positivity of the integrand that
$$
 h_0^*(-B^*\Psi_\infty^\prime(y(s)))=
 (-B^*\Psi_\infty^\prime(y(s))\; \vert\; u(s))_U-h_0(u(s))
$$
for almost every $s\ge t$. The claim easily follows from the
definition of $h_0^*$. The claim for generic $x\in \V$ follows by
approximating it by a sequence of elements of $D(A)$ and observing
that the relation (\ref{eq:lemmafond}) make sense also for $x \in
\V$.\end{proof}

\subsection{Proof of Theorem \ref{th:uniquefeedback}}

We first observe that the closed loop equation has a unique solution
$y^*$ since the feedback map defined as
$$G(x)=(h_0^*)^\prime(-B^*\Psi_\infty(x))$$
is Lipschitz continuous, as one may show by a standard fixed point
argument.

Next we prove that the control
$$u^*(s) = (h_0^*)'(-B^*\Psi_\infty^\prime(y^*(s))$$
is admissible, i.e. it belongs to $L_\lambda^p(t,+\infty;U)$. To
do so, we first observe that the relation (\ref{eq:lemmafondT})
holds true for any control $u\in L^1_{loc}(t,+\infty;U)$ such that
$J_T (t,x,u) <+\infty$ for every $T>0$. Since by definition we
have
$$
h_0(u^*(s))=\left(-B^*\Psi_\infty^\prime(y^*(s))|u^*(s)\right)_U -
h_0^*\left(-B^*\Psi_\infty^\prime(y^*(s))\right)
$$
then also $J_T (t,x,u^*) <+\infty$ for every $T>0$. So we get that
$u^*$ satisfies
\begin{equation}\label{questa}
J_T(t,x,u^*)-e^{-\lambda T}\phi_0(y^*(T))=e^{-\lambda
t}\Psi_{\infty}(x)-e^{-\lambda
T}\Psi_{\infty}(y^*(T)).\end{equation}
By means of (\ref{y-stima-1}) and proceeding as in the proof of Lemma \ref{stima y},
one derives
$$\int_t^Te^{-\lambda s}\vert y^*(s)\vertv ds\le\gamma_1+\gamma_2\Vert u^*\Vert_{L_\lambda^p(t,T;U)}$$
where $\gamma_1,\gamma_2$ are suitable constants (depending on $x$, $t$), so that
\begin{equation}\label{equesta}
\begin{split}
A:&=J_T(t,x,u^*)- e^{-\lambda T}\phi_0(y^*(T))\\ &\ge \int_t^T e^{-\lambda
s}[a\vert u^*(s)\vert_U+b]ds-C\int_t^T e^{-\lambda s}[1+\vert
y^*(s)\vertv] ds\\
&\ge a\Vert
u^* \Vert^p_{L_\lambda^p(t,T;U)}+\frac{b-C}{\lambda}-\gamma_1-\gamma_2\Vert u^*\Vert_{L_\lambda^p(t,T;U)}\\
&\ge a\Vert
u^* \Vert^p_{L_\lambda^p(t,T;U)}-\gamma_2\Vert
u^* \Vert_{L_\lambda^p(t,T;U)}-\gamma_3
\end{split}\end{equation}
for a suitable constant $\gamma_3$.
On the other hand, since by means of (\ref{y-stima-1}), and proceeding again like in the proof of Lemma \ref{stima y}, we have
$$e^{-\lambda T}(1+\vert y^*(T)\vertv)\le\gamma_4+\gamma_5\Vert u^*\Vert_{L_\lambda^p(t,T;U)}$$
where $\gamma_4,\gamma_5$ are suitable constants (depending on $x$, $t$), so that, using Lemma \ref{stima psi}
\begin{equation}\label{eanchequesta}
\begin{split}
B:=&e^{-\lambda
t}\Psi_{\infty}(x)-e^{-\lambda
T}\Psi_{\infty}(y^*(T))\\
&\le e^{-\lambda
t}\Psi_{\infty}(x)-C(\gamma_4+\gamma_5\Vert u^*\Vert_{L_\lambda^p(t,T;U)})
\end{split}\end{equation}
Hence combining this inequality with (\ref{equesta}) and (\ref{eanchequesta}) we obtain
\begin{equation*}\begin{split}a\Vert
u^*\Vert^p_{L^p_\lambda(t,T;U)}-\gamma\Vert u^*\Vert_{L^p_\lambda(t,T;U)}\le\eta
 \end{split}\end{equation*} for suitable positive constants $\gamma$ and $\eta$ independent of $T$.
 Then we may pass to
limits as $T$ goes to $+\infty$ and derive
$$a\Vert
u^*\Vert^p_{L^p_\lambda(t,+\infty;U)}-\gamma\Vert u^*\Vert_{L^p_\lambda(t,+\infty;U)}\le\eta,$$ which implies that
$u^*\in L^p_\lambda(t,+\infty;U)$, and is hence admissible.

Then by the above Theorem \ref{th:verif} we get that the couple
$(u^*,y^*)$ is optimal. The uniqueness follows by the uniqueness of
the solution of the closed loop equation.

\subsection{Proof of Thorem \ref{th:mainnew} $(iii)$}

Let $\psi \in C^1$ be any other function that satisfies the
stationary HJB equation  (\ref{SHJB}) in classical sense as from
Definition \ref{defsolSHJB}. Then we have, arguing exactly as in
the proof of Theorem \ref{th:verif}, that $\psi$ satisfies the
fundamental relation (\ref{eq:lemmafond}) in place of
$\Psi_\infty$. This implies, setting $t=0$,  that $\psi(x)\le
J_\infty(0,x,u)$ and consequently
$$\psi (x) \le \Psi_\infty (x).$$ Moreover if we find an
admissible control $u$ at $x$ such that
$$(h_0^*)^\prime(-B^*\psi^\prime(y(s)))=(-B^*\psi^\prime(y(s))\; \vert\;
u(s))_U-h_0(u(s))$$ then $\psi(x)= J_\infty(0,x,u)$,  $u$ is
optimal, and then $\psi (x) = \Psi_\infty (x)$. Such a control
exists as one may derive arguing as in the proof of the Theorem
\ref{th:uniquefeedback}.

\section{The economic example of optimal investment with vintage capital}\label{es eco}

We here describe our motivating example: the infinite horizon
problem of optimal investment with vintage capital, in the setting
introduced by Barucci and Gozzi \cite{BG1}\cite{BG2}, and later
reprised and generalized by Feichtinger et al. \cite{F1,F2,F3}, and by
  Faggian \cite{Fa2,Fa3}.

The capital accumulation is described by the following system
\begin{equation}\label{ipde}\begin{cases}&\frac{\partial y( \tau
,s)}{\partial \tau }+
 \frac{\partial y( \tau ,s)}{\partial s}+
\mu y( \tau ,s) =u_1( \tau ,s), \quad (\tau,s)
\in]t,+\infty[\times
]0,\bar s]\\
& y( \tau ,0)=u_0( \tau ), \quad \tau \in]t,+\infty[\\
&y(t,s)=x(s), \quad s\in [0,\bar s]\end{cases}\end{equation} with
$t>0$ the initial time, $\bar s\in[0,+\infty]$ the maximal allowed
age, and $\tau\in[0,T[$ with  horizon $T=+\infty$. The unknown
$y(\tau,s)$ represents the amount of capital goods of age $s$
accumulated at time $\tau$, the initial datum is a function $x\in
L^2(0,\bar s)$, $\mu>0$ is a depreciation factor. Moreover,
$u_0:[t,+\infty[\to {{\mathbb R}}$ is the investment in new
capital goods ($u_0$ is  the boundary  control) while
$u_1:[t,+\infty[\times[0,\bar s]\to {{\mathbb R}}$ is the
investment at time $\tau$ in capital goods of age $s$ (hence,
the distributed control). Investments are jointly referred to as
the control $u=(u_0,u_1)$.

Besides, we consider the firm profits represented by the
functional
$$I(t,x;u_0,u_1 )=\int_t^{+\infty}e^{-\lambda\tau} [R({Q(\tau)})
-c({u(\tau)})]d\tau$$ where, for some
given measurable coefficient $\a$, we have that
$${Q(\tau)}=\int_0^{\bar
s}\alpha(s){{y(\tau,s)}}ds$$ is the output rate (linear in
$y(\tau)$) $R$ is a concave revenue from $Q(\tau)$ (i.e., from
$y(\tau)$). Moreover we have
$$c({u_0(\tau),u_1(\tau)})=\int_0^{\bar{s}}
  c_1(s,u_1(\tau,s))ds +c_0(u_0(\tau)),$$
  with $c_1$ indicating the investment cost rate for
  technologies of age $s$, $c_0$  the investment cost in new
  technologies, including adjustment-innovation, $c_0$, $c_1$ convex in the control variables.

The entrepreneur's problem is that of maximizing $I( t, x;u_0,u_1
\kern-1pt)$ over all state--control pairs $\{y, (\kern-1pt
u_0,u_1\kern-1pt)\kern-1pt\}$ which are solutions in a suitable
sense of equation (\ref{ipde}).
Such problems are known as {\em vintage capital} problems, for the capital goods depend jointly on time $\tau$ and on age $s$, which is equivalent to their dependence from time and  vintage $\tau-s$.

\bigskip
When rephrased in an infinite dimensional setting, with
$H:=L^2(0,\bar s)$ as state space, the state equation (\ref{ipde})
can be reformulated as a linear control system with an unbounded
control operator, that is

\begin{equation}\begin{cases}\label{eq:statoecinH}
y^{\prime }(\tau)=A_0 y(\tau)+Bu(\tau), &\tau\in]t,+\infty[;\\
y(t)=x,\end{cases}
\end{equation}
where $y:[t,+\infty[\to H$, $x\in H$, $A_0:D(A_0)\subset H\to H$ is
the infinitesimal generator of a strongly continuous semigroup $\{
e^{\A t}\}_{t\ge0}$ on $H$ with domain $D(A_0)=\{f\in H^1(0,\bar
s):f(0)=0\}$ and defined as $A_0 f(s)=-f^\prime(s)-\mu f(s)$, the
control space is $U={{\mathbb R}}\times H$, the control function is
a couple $u\equiv(u_0,u_1):[t,+\infty[\to {{\mathbb R}}\times H$,
and the control operator is given by $Bu\equiv B(u_0,u_1)=
u_1+u_0\delta _{0}$, for all $(u_0,u_1)\in{{\mathbb R}}\times H$,
$\delta_0$ being the Dirac delta at the point $0$. Note that,
although $B\not\in L(U,H)$, is $B\in L(U,D(\A^*)^\prime)$. The reader can find
in \cite{BG1}the (simple)
proof of the following theorem, which we will exploit in a short while.
\begin{theo} \label{th:statoinH} Given any initial datum $x\in H$ and control
$u \in L^p_\lambda (t,+\infty;U)$ the mild solution of the equation
(\ref{eq:statoecinH})
$$
y(s)=e^{(s-t)A}x+\int_t^s e^{(s-\tau)A}Bu(\tau)d\tau
$$
belongs to $C([t,+\infty);H)$.
\end{theo}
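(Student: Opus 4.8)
The plan is to exploit the very explicit structure of the semigroup generated by $\A$ in this example, which reduces the whole statement to a short computation together with continuity of translations in $L^p$. Since continuity on $[t,+\infty[$ is equivalent to continuity on each bounded subinterval $[t,T]$, I would work on a fixed finite horizon $T$ throughout. First I would write $\{e^{\t\A}\}$ down explicitly: solving $\partial_\t y+\partial_s y+\mu y=0$ with $y(\t,0)=0$ along characteristics gives
\begin{equation*}
(e^{\t\A}x)(s)=e^{-\mu\t}\,x(s-\t)\,\mathbf 1_{\{s\ge\t\}},
\end{equation*}
a weighted right shift that is a contraction on $H=L^2(0,\bar s)$ (indeed nilpotent when $\bar s<+\infty$). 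Since $x\in H$ and $\{e^{\t\A}\}$ is strongly continuous on $H$, the free term $s\mapsto e^{(s-t)\A}x$ already lies in $C([t,T];H)$ by standard $C_0$--semigroup theory, so no work is needed there.

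Next I would split the control through $Bu(\t)=u_1(\t)+u_0(\t)\delta_0$ and treat the two pieces separately. The distributed part $s\mapsto\int_t^s e^{(s-\t)\A}u_1(\t)\,d\t$ is routine: $u_1(\t)\in H$, the operators $e^{(s-\t)\A}$ are uniformly bounded on $H$ for $s-\t\in[0,T]$, and $u_1\in L^1_{loc}(t,+\infty;H)$, so the Bochner integral is well defined in $H$ and depends continuously on $s$.

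The hard part, and the only genuinely delicate point since $\delta_0\notin H$, is the boundary term $z(s):=\int_t^s e^{(s-\t)\A}u_0(\t)\delta_0\,d\t$. Here I would use the explicit action $e^{\sigma\A}\delta_0=e^{-\mu\sigma}\delta_\sigma$ (the Dirac mass transported along the characteristic and damped), which is verified by computing the adjoint left--shift semigroup and evaluating at $0$. Then, testing against $\psi\in H$ and substituting $\xi=s-\t$, one finds that $z(s)$ is represented by the genuine $L^2$ function
\begin{equation*}
z(s)(\xi)=e^{-\mu\xi}\,u_0(s-\xi)\,\mathbf 1_{\{0\le\xi\le s-t\}},\qquad \xi\in[0,\bar s].
\end{equation*}
Membership $z(s)\in H$ follows because the defining integral runs over a bounded $\xi$--window, on which $u_0$ is square integrable; this is precisely the mechanism by which the unbounded operator $B$ still produces an $H$--valued trajectory, the singularity being smeared out by the transport.

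Finally, continuity $s\mapsto z(s)\in H$ reduces to continuity of translations in $L^p$ applied to the reflected, weighted $u_0$, together with the continuous dependence of the window $[0,s-t]$ on $s$; a short estimate of $\vert z(s_2)-z(s_1)\vert_H$ splitting the difference into a translation term and a boundary term makes this precise. Summing the three continuous $H$--valued contributions yields $y\in C([t,T];H)$ for every $T$, hence $y\in C([t,+\infty[;H)$, which is the claim. I expect the translation-continuity estimate for $z$ to be the only step requiring care; everything else is either classical semigroup theory or the explicit transport formula above.
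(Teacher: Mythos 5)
The paper itself does not prove Theorem \ref{th:statoinH}: it refers the reader to \cite{BG1} for the ``(simple) proof'', and your argument is essentially that proof --- the explicit transport formula for $e^{\t\A}$, the splitting $Bu=u_1+u_0\delta_0$, the identity $e^{\sigma A}\delta_0=e^{-\mu\sigma}\delta_\sigma$ obtained through the adjoint left-shift semigroup on $V=D(A_0^*)$, the resulting representation of the boundary term as the function $\xi\mapsto e^{-\mu\xi}u_0(s-\xi)\mathbf{1}_{[0,s-t]}(\xi)$, and continuity via continuity of translations in $L^2$. So your proposal is correct and takes the same route as the paper's (cited) proof. One caveat you should make explicit: the step ``$u_0$ is square integrable on the bounded window'' uses $L^p_{loc}\subset L^2_{loc}$, i.e.\ $p\ge 2$; this holds in both regimes the paper actually uses (Assumption \ref{asst2}[8.a] forces $p>2$, and in the [8.b] economic case the costs are quadratic, so $p=2$), but for a bare $p\in(1,2)$, which Assumption \ref{asst2}[7] formally allows, the boundary term lands only in $L^p(0,\bar s)$ and the conclusion $y(s)\in H$ would fail --- a defect of the statement's stated generality rather than of your argument, but one your proof silently inherits.
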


Following Remark \ref{noidentif}, we then set $$V=D(A_0^*)=\{f\in
H^1(0,\bar s):f(\bar s)=0\}$$ and $\V =D(\A^*)^\prime$.
Regarding the target functional, we set
$$J_\infty(t,x;u):=-I(t,x;u_0,u_1),$$
with:

 $g_0:\V\to
{{\mathbb R}}, ~g_0(x) =- R( \langle\alpha , x\rangle),$


$h_0: U\to {{\mathbb R}}, ~h_0(u)=c_0(u_0)+\int_0^{\bar{s}} c_1(s,
u_1(s)) ds.$

\begin{rem} As announced in Remark \ref{estensioni}, here the extension of the datum
 $g_0$ to $\V$
is straightforward, as long as we assume that $\alpha\in V$ and replace scalar product in $H$ with the duality
in $V,\V$.

Note further that $\omega=0$, $\lambda>0$ (the type of the semigroup
is negative and equal to $-\mu$).  \hfill\qed\end{rem}

As the problem now fits into our abstract setting, the main results of the previous sections apply to the economic problem when data
$R$, $c_0$, $c_1$ satisfy Assumption \ref{asst2}[8.a] or [8.b]. In
particular, such thing happens in the following two interesting cases:

\begin{itemize}
    \item     If we assume, for instance, that
$R$ is a concave, $C^1$, sublinear function (for example one could
take $R$ quadratic in a bounded set and then take its linear
continuation, see e.g. \cite{F1,F3}), and $c_0$, $c_1$ quadratic
functions of the control variable, then Assumption \ref{asst2}[8.b]
holds.
    \item Assumption \ref{asst2}[8.a] is instead satisfied when $R$ is, for
instance, quadratic - as it occurs in some other meaningful economic
problems - and $c_0$, $c_1$ are equal to $+\infty$ outside some
compact interval, and equal to any convex {\it l.s.c.} function
otherwise. Such case corresponds to that of constrained controls
(controls that violate the constrain yield infinite costs).
\end{itemize}


In these cases, Theorems \ref{main}, \ref{th:mainnew},
\ref{th:uniquefeedback} hold true. In particular Theorem
\ref{th:uniquefeedback} states the existence of a unique optimal
pair $(u^*,y^*)$ for any initial datum $x \in \V$. Note that in
general the optimal trajectory $y^*$ lives in $\V$. However, since
the economic problem makes sense in $H$, we would now like to
infer that  whenever   $x$ (the initial age distribution of
capital) lies in $H$, then   the whole optimal trajectory lives in
$H$. Indeed, this is guaranteed by Theorem \ref{th:statoinH}.

All these results allow to perform the analysis of the behavior of
the optimal pairs and to study phenomena such as the diffusion of
new technologies (see e.g. \cite{BG1,BG2}) and the anticipation
effects (see e.g. \cite{F1,F3}). With respect to the results in
\cite{BG1,BG2}, here also the case of nonlinear $R$ (which is
particularly interesting from the economic point of view, as it
takes into account the case of large investors) is considered.
With respect to the results in \cite{F1,F3}, here the existence of
optimal feedbacks yields a tool to study more deeply the long run
behavior of the trajectories, like the presence of long run
equilibrium points and their properties.

\bigskip

\end{document}